\titleformat{\section}[hang]{\Large\bfseries\scshape}{\thesection\quad}{0pt}{}
\titleformat{\subsection}[hang]{\large\bfseries\scshape}{\thesubsection\quad}{0pt}{}
\titleformat{\subsubsection}[hang]{\bfseries\scshape}{\thesubsubsection\quad}{0pt}{}
\titleformat{\paragraph}[hang]{\bfseries\scshape}{\theparagraph\quad}{0pt}{}
\newtheorem{theorem}{\textsc{Theorem}}[section]
\newtheorem{lemma}[theorem]{\textsc{Lemma}}
\newtheorem{proposition}[theorem]{\textsc{Proposition}}
\newenvironment{definition}[1][\textsc{Definition.}]
  {\begin{trivlist}
    \item[\hskip \labelsep {\bfseries #1}]}
  {\end{trivlist}}
\newenvironment{remark}[1][\textsc{Remark.}]
  {\begin{trivlist}
    \item[\hskip \labelsep {\bfseries #1}]}
  {\end{trivlist}}
\newcommand{\N}{\mathbb N}
\newcommand{\Z}{\mathbb Z}
\newcommand{\Q}{\mathbb Q}
\newcommand{\C}{\mathbb C}
\newcommand{\F}{\mathbb F}
\newcommand{\Fpb}{\bar{\mathbb F}_p}
\renewcommand{\O}{\mathcal O}
\newcommand{\calA}{\mathcal A}
\newcommand{\frakP}{\mathfrak{P}}
\newcommand{\Cl}{\mathcal {C} \ell}
\newcommand{\Ell}{\mathcal E \ell \ell}
\newcommand{\End}{\operatorname{End}}
\newcommand{\Aut}{\operatorname{Aut}}
\newcommand{\Gal}{\operatorname{Gal}}
\newcommand{\Norm}{\operatorname{Norm}}
\begin{document}

  \title{Computing Isogenies between Supersingular Elliptic Curves over $\F_p$}

  \author{Christina Delfs}
  \address{Carl von Ossietzky Universit{\"a}t Oldenburg}
  \email{christina.delfs@uni-oldenburg.de}

  \author{Steven D. Galbraith}
  \address{University of Auckland}
  \email{s.galbraith@math.auckland.ac.nz}

  \date{October 25, 2013}

  \thispagestyle{empty}

  \begin{abstract}
    Let $p>3$ be a prime and let $E$, $E'$ be supersingular elliptic curves over $\F_p$. We want to construct an isogeny $\phi:E\to E'$. The currently fastest algorithm for finding isogenies between supersingular elliptic curves solves this problem in the full supersingular isogeny graph over $\F_{p^2}$.
    It takes an expected $\tilde\O(p^{1/2})$ bit operations, and also $\tilde\O(p^{1/2})$ space, by performing a ``meet-in-the-middle'' breadth-first search in the isogeny graph.

    In this paper we consider the structure of the isogeny graph of supersingular elliptic curves over $\F_p$. We give an algorithm to construct isogenies between supersingular curves over $\F_p$ that works in $\tilde\O(p^{1/4})$ bit operations. We then discuss how this algorithm can be used to obtain an improved algorithm for the general supersingular isogeny problem.
  \end{abstract}

  \maketitle


\section{Introduction}
\label{section_int}

The problem of computing an isogeny between two given elliptic curves has been studied by many authors and has several applications \cite{kohel1996endomorphism, galbraith1999constructing, galbraith2002extending, jao2005all, charles2009cryptographic, stolbunov2010constructing, jao2009expander, jao2011towards}. A natural question that has not previously been considered is to construct isogenies between two given supersingular elliptic curves over $\F_p$.

Let $p$ be a prime, $q := p^n$ for some integer $n$ and let $L$ be a non-empty set of small primes with $p\notin L$.
Let $K$ be either $\F_q$ or $\Fpb$.
The \emph{supersingular isogeny graph $X( K, L)$} is a directed graph where the vertices are $K$-isomorphism classes of supersingular elliptic curves over $\F_q$ and the edges are equivalence classes of $\ell$-isogenies defined over $K$ between such curves for $\ell\in L$. (Two isogenies are equivalent if they have the same kernel.) If we only consider $L=\{\ell\}$, we write $X(K, \ell)$. Usually the vertices are represented by $j$-invariants.

For various reasons we always assume $p > 3$ in this paper.  Note that $X(\Fpb, L )$ has only one vertex when $p < 11$ and so all the problems we consider are trivial for small $p$.

If we regard the full supersingular isogeny graph $X(\Fpb,\ell)$, it suffices to consider elliptic curves defined over $\F_{p^2}$, since the $j$-invariant of a supersingular elliptic curve always lies in $\F_{p^2}$. In general the isogenies will still be defined over $\bar\F_p$ though.

Let $S_{p^2}$ be the set of all supersingular $j$-invariants in $\F_{p^2}$. It is well-known (e.g. \cite[Theorem V.4.1(c)]{silverman2009arithmetic}) that for a prime $p>3$ we have
\begin{eqnarray*}
  \# S_{p^2} = \left\lfloor \frac{p}{12} \right\rfloor +
  \begin{cases}
    0 & \text{if } \ p \equiv 1 \pmod{12}, \\
    1 & \text{if } \ p \equiv 5, 7 \pmod{12}, \\
    2 & \text{if } \ p \equiv 11 \pmod{12}.
  \end{cases}
\end{eqnarray*}

In contrast to the ordinary case, the graph $X(\bar\F_p,\ell)$ has an irregular structure but is always fully connected for every prime $\ell$ (see \cite{mestre1986methode} or \cite[Corollary 78]{kohel1996endomorphism}). Thus we can use a chain of isogenies of small prime degree (i.e. $\ell=2$) to construct an isogeny between two given supersingular elliptic curves over $\F_{p^2}$. Those isogenies are fast to compute.
These graphs are known to be expanders (see~\cite{charles2009cryptographic} for references), so they have small diameter and there is a short path between any two vertices.
A natural problem is to find a path between any two vertices in the graph.

A general ``meet-in-the-middle'' idea for finding paths in graphs (also called ``bi-directional search''), was proposed by Pohl~\cite{Poh69}. (Indeed, this finds shortest paths.)
This idea was used by Galbraith~\cite{galbraith1999constructing} to construct isogenies between elliptic curves, and it is applicable for both ordinary and supersingular curves. The problem is that the algorithm requires large storage, and is not easy to parallelise.
In the ordinary case, a low-storage and parallelisable algorithm was proposed by Galbraith, Hess and Smart~\cite{galbraith2002extending}  and improved by Galbraith and Stolbunov~\cite{galbraith2011improved}. (These algorithms are no longer guaranteed to find the shortest path.)
While some of these ideas can be adapted to get a low-storage parallel algorithm for the supersingular isogeny problem, there are several reasons why the supersingular case is more awkward than the ordinary case: we might wish to use just $L = \{ 2 \}$ and then it is hard to prevent short cycles and walks are ``not random enough'';  unlike~\cite{galbraith2002extending} there is no process to ``shorten'' or ``smooth'' a long walk.
Hence, it has remained an open problem to get a good isogeny algorithm for the supersingular graph $X( \Fpb,  2  )$.

The subgraph of $X( \Fpb,  2  )$ we get through deleting the vertices where the $j$-invariants are in $\F_{p^2}\setminus\F_p$ is considerably smaller. For a prime $p>3$ let $S_p$ be the set of all supersingular $j$-invariants in $\F_p$. Then
\begin{eqnarray} \label{eq:Sp}
  \# S_p = \begin{cases}
    \tfrac{1}{2}h(-4p) & \text{if } \ p \equiv 1 \pmod 4 \\
    h(-p) & \text{if } \ p \equiv 7 \pmod 8 \\
    2 h(-p) & \text{if } \ p \equiv 3 \pmod 8
  \end{cases}
\end{eqnarray}
where $h(d)$ is the class number of the imaginary quadratic field $\Q(\sqrt{d})$. This may be proved using a counting argument (see \cite[Theorem 14.18]{Cox}).

Since the class number of an imaginary quadratic field $K$ with discriminant $d_K$ can be bounded as $h_K \leq \tfrac{1}{\pi} \sqrt{|d_K|}\ln{|d_K|}$ (see \cite[Exercise 5.27]{cohen1996course}), the size of this set is $\tilde\O(\sqrt{p})$, so we expect shorter paths when working in this smaller graph and thus faster algorithms for constructing isogenies. The problem is that in general those graphs are not connected, and hence it is not always possible to obtain an isogeny with degree a power of $\ell$ between arbitrary supersingular elliptic curves over $\F_p$ without going via elliptic curves over $\F_{p^2}$.

So there are two questions arising about this subgraph:
\begin{itemize}
  \item How many prime isogeny degrees $\ell\in L$ do we have to allow until the subgraph of supersingular elliptic curves over $\F_p$ is connected?
  \item Is there an algorithm for computing an isogeny between supersingular elliptic curves over $\F_p$ which is faster than the known algorithms for the full graph $X( \Fpb, L )$?
\end{itemize}

We will answer both questions in the course of this work.
Section~\ref{section_graphs} explains the structure of the graph $X( \F_p, L )$.
The main observation is that the supersingular case restricted to $\F_p$ closely resembles the ordinary case, and so the known advantages of that situation can be exploited for supersingular curves too.
Section~\ref{section_iso_prob} presents an algorithm, arising from these considerations, that computes isogenies between supersingular elliptic curves over $\F_p$.
Section~\ref{section_on} explains how our methods also lead to a good solution to the general isogeny problem (i.e., in the full graph $X( \Fpb, L )$).
A few toy example graphs over $\bar\F_p$ and $\F_p$ are given in the appendix to illustrate the results of Section~\ref{section_graphs}.

\section{The Structure of Supersingular Isogeny Graphs}
\label{section_graphs}

We first make some remarks about the relation between $X( \F_p, L )$ and $X( \Fpb, L )$. Importantly, with our definitions, the former is not a subgraph of the latter.

An ordinary elliptic curve over $\F_p$ is never isogenous to its non-trivial quadratic twist since they have a different number of $\F_p$-rational points, so we never have to care about twists when considering isogenies between ordinary elliptic curves over $\F_p$. If the curves are supersingular though, this is not the case.

Let $p>3$. A supersingular elliptic curve over $\F_p$ has $p+1$ points and so all quadratic twists have the same number of points. Thus the twists are isogenous but lie in different $\F_p$-isomorphism classes. Therefore it is not very precise to represent the vertices in the supersingular isogeny graph over $\F_p$ with $j$-invariants, since then the different isomorphism classes collapse to only one vertex and the picture of in- and outgoing isogenies is distorted. So if we want to differentiate between twists, we have to store more information than just the $j$-invariants of the elliptic curves, for instance the quantities $c_4(E), c_6(E)$ in addition to $j(E)$. We will see that we have twice the number of vertices in $X(\F_p, L )$ as in $X(\bar\F_p, L) \cap \F_p$.

In this situation it is no longer possible to compute the neighbours of a given vertex using only the modular polynomial, since this only produces the $j$-invariant of the image curve and does not keep track of twists. Instead we use the formulae of V{\'e}lu \cite{velu1971isogenies} to compute the image curve under an isogeny whose kernel is a Galois-invariant subgroup $G$ of $E$ with order $\ell$. These subgroups can be constructed from factors of the $\ell^\text{th}$ division polynomial or using a basis of $E[\ell]$. This method was used to construct the graphs in the appendix.

If we construct the supersingular $\ell$-isogeny graph over $\F_p$ regarding these considerations, a much more regular structure appears as demonstrated in the Appendix for some examples with small $p$ and $\ell$. On closer examination the graphs $X(\F_p, \ell)$ resemble the ``volcano''-structure of the ordinary case, only that here we have mostly ``craters'', that is, isogeny-circles. We want to describe now why this structure appears.

As in the ordinary case, the properties of isogenies between supersingular elliptic curves over $\F_p$ are strongly connected to the structure of their endomorphism rings. We define $\End_{\F_q} E$ to be the ring of endomorphisms of $E$ that are defined over $\F_q$. In general we know that the endomorphism ring of an elliptic curve over $\F_q$ is an order in a division algebra $\calA := \End_{\F_q} E \otimes_\Z \Q$. Depending on the number of $\F_q$-rational points there are some more precise results about this as can be seen in the next theorem (see \cite{waterhouse1969abelian} or \cite{rueck1987note}).

\begin{theorem}
  \label{thm_end}
  Let $p>3$, $q = p^n$ and $E$ be a supersingular elliptic curve over $\F_{q}$ with $\#E(\F_{q}) = q+1-t$ where and $|t|\leq 2\sqrt{q}$. Then one of the following cases must be true:
  \begin{enumerate}
    \item $n$ is even and $t = \pm2\sqrt q$,
    \item $n$ is even, $p\not\equiv 1 \pmod 3$ and $t=\pm\sqrt q$,
    \item $n$ is even and $p\not\equiv 1 \pmod 4$ and $t=0$,
    \item $n$ is odd and $t=0$,
  \end{enumerate}

  In this situation the corresponding division algebra $\calA$ is also determined by the cases. Let $\pi_q$ be the $q$-th power Frobenius endomorphism.

  In the first case $\calA$ is a quaternion algebra over $\Q$, $\pi_q$ is a rational integer and $\End_{\F_q} E$ is a maximal order in $\calA$.

  In the other three cases $\calA = \Q(\pi_q)$ is an imaginary quadratic field over $\Q$ and $\End_{\F_q} E$ is an order in $\calA$ with conductor prime to $p$.
\end{theorem}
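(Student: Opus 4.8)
The plan is to split the statement into two independent tasks: first determine which traces $t$ can occur, and then read off the algebra $\calA$ and the ring $\End_{\F_q}E$ in each case. The single tool driving both halves is the characteristic polynomial of the $q$-Frobenius, $\pi_q^2 - t\pi_q + q = 0$, together with the Honda--Tate dictionary between isogeny classes over $\F_q$ and Weil $q$-numbers.

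For the list of admissible $t$, I would start from the valuation criterion for supersingularity. Looking at the Newton polygon of $x^2 - tx + q$ at $p$, with vertices $(0,n)$, $(1,v_p(t))$, $(2,0)$, the curve is supersingular exactly when both Frobenius eigenvalues have (normalised) $p$-adic valuation $n/2$, i.e.\ when $v_p(t) \geq n/2$. Feeding this into the Weil bound $|t| \leq 2\sqrt q = 2p^{n/2}$ does most of the work. If $n$ is odd, then $v_p(t) \geq n/2$ forces $p^{(n+1)/2} \mid t$, and since $p \geq 5$ we have $p^{(n+1)/2} > 2p^{n/2}$, leaving only $t = 0$; this is case (4). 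If $n$ is even, then $p^{n/2}\mid t$ leaves exactly $t \in \{0, \pm\sqrt q, \pm 2\sqrt q\}$, the candidates for cases (1)--(3), and here $\sqrt q = p^{n/2}$ is a genuine integer.

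It remains to decide which candidates are realised and to produce the congruence conditions, and for this I would invoke the Honda--Tate existence theorem: a Weil $q$-number $\pi_q$ comes from a $1$-dimensional abelian variety over $\F_q$ precisely when the local invariants of $\calA$ force $[\calA:\Q(\pi_q)]^{1/2}[\Q(\pi_q):\Q] = 2$. For $t = \pm\sqrt q$ one computes $\Q(\pi_q) = \Q(\sqrt{-3})$, and for $t = 0$ with $n$ even one gets $\Q(\pi_q) = \Q(\sqrt{-1})$; in both situations supersingularity of $E$ forces $p$ to be non-split in $\Q(\pi_q)$, which translates exactly into $p \not\equiv 1 \pmod 3$ and $p \not\equiv 1 \pmod 4$ respectively, while $t = \pm 2\sqrt q$ gives $\pi_q = \pm\sqrt q \in \Z$ and is always admissible.

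Finally, for the endomorphism structure: when $t = \pm 2\sqrt q$ the relation $(\pi_q \mp \sqrt q)^2 = 0$ shows $\pi_q = \pm\sqrt q$ is a central rational integer, so conjugation by Frobenius fixes every geometric endomorphism; hence $\End_{\F_q}E = \End_{\Fpb}E$, which by Deuring's theorem is a maximal order in the quaternion algebra ramified exactly at $p$ and $\infty$, giving case (1). In the remaining cases $t^2 - 4q < 0$, so $\calA = \Q(\pi_q)$ is imaginary quadratic and $\End_{\F_q}E$ is an order $\O$ with $\Z[\pi_q] \subseteq \O \subseteq \O_{\Q(\pi_q)}$. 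I expect the genuinely delicate point to be showing the conductor of $\O$ is prime to $p$: this is a local statement at $p$ that does not follow formally from the eigenvalue bookkeeping but needs Tate's description of the $\Hom$-modules (equivalently a Dieudonn\'e-module computation), using that $p$ is non-split in $\Q(\pi_q)$ to force $\O_p$ to be maximal. Everything else reduces to the elementary valuation and discriminant bookkeeping sketched above.
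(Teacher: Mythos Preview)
The paper does not prove this theorem at all: it is quoted as a known result with a reference to Waterhouse and R\"uck, so there is no in-paper argument to compare against. Your sketch is correct and is essentially the classical Waterhouse route (Newton-polygon/valuation criterion to list the possible $t$, Honda--Tate to impose the congruence conditions on $p$ via the non-splitting of $p$ in $\Q(\pi_q)$, and Tate/Dieudonn\'e locally at $p$ for the conductor statement). One phrasing nit: in cases (2) and (3) it is not ``supersingularity'' that forces $p$ to be non-split in $\Q(\pi_q)$ but rather the requirement that the simple abelian variety attached to the Weil number have dimension $1$; if $p$ split, the Honda--Tate invariant at each prime above $p$ would be $1/2$ and the minimal dimension would be $2$. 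Otherwise the outline is sound, including your flagging of the conductor-prime-to-$p$ assertion as the only step that genuinely needs the local machinery.
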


Now, if we take a supersingular elliptic curve $E$ over $\F_p$ with $p>3$, we end up in Case~4 of Theorem~\ref{thm_end}. Thus we know according to the theorem that $\End_{\F_p} E$ is an order in $K := \Q(\pi_p)$ and its conductor is prime to $p$. Since $\pi_p^2+p = 0$ holds, we get $K = \Q(\sqrt{-p})$. Furthermore
\begin{eqnarray*}
  \Z\left[\pi_p\right] = \Z\left[\sqrt{-p}\right] = \Z\left[\tfrac{d+\sqrt{d}}{2}\right] \subseteq \End_{\F_p} E \subseteq \Z\left[\tfrac{d_K+\sqrt{d_K}}{2}\right] = \O_K
\end{eqnarray*}
has to hold where $d=-4p$, $\O_K$ is the maximal order and $d_K$ the fundamental discriminant of $K$. Due to the properties of the fundamental discriminant, we have $d = c^2\cdot d_K$ where $c\in\N$ is maximal such that $d_K \equiv 0,1 \pmod 4$ and is called the \emph{conductor of $\Z[\pi_p]$ in $\O_K$}. From these observations we can conclude:

\begin{itemize}
  \item If $p \equiv 1 \pmod 4$, we always get $d_K = d = -4p$, $\Z[\pi_p] = \O_K$ and hence $\End_{\F_p} E = \Z\left[\sqrt{-p}\right]$ for a supersingular elliptic curve $E$ over $\F_p$.
  \vspace{1ex}
  \item If $p \equiv 3 \pmod 4$, we get $d_K = -p$. Thus $\Z[\pi_p] = \Z\left[\sqrt{-p}\right]$ has conductor $c=2$ in $\O_K = \Z\left[\tfrac{1+\sqrt{-p}}{2}\right]$ and $\End_{\F_p} E$ must be one of those two orders.
\end{itemize}

In terms of isogeny-volcanoes we can say that we have at most two levels. We will use the following terminology.

\begin{definition}
  Let $E$ be a supersingular elliptic curve over $\F_p$. We say \emph{$E$ is on the surface} (resp.  \emph{$E$ is on the floor}) if $\End_{\F_p} E = \O_K$ (resp.  $\End_{\F_p} E = \Z[\sqrt{-p}]$). Note that for $p\equiv 1\pmod 4$ surface and floor coincide.

  Let $\phi$ be an $\ell$-isogeny between supersingular elliptic curves $E$ and $E'$ over $\F_p$. If $\End_{\F_p} E \cong \End_{\F_p} E'$, then $\phi$ is called \emph{horizontal}. If $E$ is on the floor and $E'$ is on the surface (resp. $E$ on the surface and $E'$ on the floor), $\phi$ is called  \emph{$\ell$-isogeny up} (resp. \emph{down}).
\end{definition}

In the supersingular case there are fewer possibilities for $\ell$-isogenies up and down than for ordinary volcanoes (though, even in the ordinary case tall volcanoes are quite rare). This is due to the fact that for an isogeny $\phi:E\to E'$ with $[\End E:\End E'] = \ell$ we have $\ell\mid\deg\phi$ (see \cite[Propositions 21 and 22]{kohel1996endomorphism}). Since in our case $[\End E:\End E'] \in\{1, 2\}$ (resp. $[\End E':\End E] \in\{1,2\}$) we get the following statement.

\begin{lemma}
  Let $\phi$ be a non-horizontal isogeny between supersingular elliptic curves over $\F_p$. Then the degree of $\phi$ is divisible by $2$.
\end{lemma}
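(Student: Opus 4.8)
The plan is to reduce the statement to the index-divisibility criterion of Kohel cited just above, so the argument is short. First I would unwind the hypothesis: by definition, $\phi$ being non-horizontal means $\End_{\F_p} E \not\cong \End_{\F_p} E'$. By the analysis preceding the definition of surface and floor, the endomorphism ring of a supersingular curve over $\F_p$ is one of the two orders $\O_K$ and $\Z[\sqrt{-p}]$, and these coincide exactly when $p \equiv 1 \pmod 4$. Hence a non-horizontal $\phi$ can only occur for $p \equiv 3 \pmod 4$, with one of $E, E'$ on the surface (endomorphism ring $\O_K$) and the other on the floor (endomorphism ring $\Z[\sqrt{-p}]$).

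Next I would pin down the index of the two orders. For $p \equiv 3 \pmod 4$ we have $d_K = -p$ and $d = -4p = c^2 d_K$, so the conductor of $\Z[\sqrt{-p}]$ in $\O_K$ is $c = 2$; since the index of an order in the maximal order equals its conductor, the larger of the two endomorphism rings contains the smaller with index exactly the prime $2$. If $\phi$ is an isogeny down, then $\End_{\F_p} E = \O_K \supset \Z[\sqrt{-p}] = \End_{\F_p} E'$ has index $2$, and \cite[Propositions 21 and 22]{kohel1996endomorphism} with $\ell = 2$ immediately gives $2 \mid \deg \phi$. If instead $\phi$ is an isogeny up, I would apply the same criterion to the dual isogeny $\hat\phi \colon E' \to E$, which is an isogeny down of the same degree, so that $2 \mid \deg \hat\phi = \deg \phi$.

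Since the cited result carries the whole weight, there is no genuine obstacle here; the only points requiring care are two bookkeeping checks — that the relevant index is exactly the prime $2$ (i.e.\ the conductor computation $c = 2$), and that the criterion is invoked in the correct inclusion direction, which for an isogeny up is arranged by passing to the dual. One should also note that $\phi$ need not have prime degree, but this causes no trouble because the cited criterion is stated for an arbitrary isogeny whose endomorphism rings differ by a prime index. Were the Kohel criterion not available, the real work would shift to proving that index-divisibility relation itself, via kernel ideals and the behaviour of $\ell$ in the order; here we may take it as given.
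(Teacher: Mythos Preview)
Your proposal is correct and follows essentially the same route as the paper: the paper also derives the lemma directly from Kohel's Propositions 21 and 22 together with the observation that $[\End E:\End E']$ (or its reciprocal) lies in $\{1,2\}$. Your added details---the explicit conductor computation $c=2$ for $p\equiv 3\pmod 4$ and the passage to the dual for an isogeny up---are reasonable elaborations of what the paper compresses into the parenthetical ``(resp.\ $[\End E':\End E]\in\{1,2\}$)''.
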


Therefore we have no isogenies of odd prime degree going up or down in this graph.

To determine how many isogenies there are we need some theory about the ideal class group. We recall the relevant background below.

First we can make an observation about the number of $\F_p$-isomorphism classes of supersingular elliptic curves over $\F_p$ with a given $j$-invariant, based on the following proposition which follows directly from \cite[Theorem 2.2]{broker2006constructing}.

\begin{proposition}
  \label{prop_number}
  Let $p>3$ be a prime and $j \in \F_p$. Define $C_{p, j}$ as the set of $\F_p$-isomorphism classes of elliptic curves defined over $\F_p$ with $j$-invariant $j$. Then we get
  \begin{eqnarray*}
    \#C_{p, j}
    = \begin{cases}
        6 & j=0 \text{ and } p \equiv 1 \pmod 3 \\
        4 & j=1728 \text{ and } p \equiv 1 \pmod 4 \\
        2 & \text{otherwise.}
    \end{cases}
  \end{eqnarray*}
\end{proposition}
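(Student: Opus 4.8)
The plan is to identify $C_{p, j}$ with the set of twists of a single fixed curve, and to count these via Galois cohomology. Fix any elliptic curve $E_0$ over $\F_p$ with $j(E_0)=j$; since $p>3$, the $j$-invariant determines the curve up to $\Fpb$-isomorphism, so every class in $C_{p, j}$ becomes isomorphic to $E_0$ over $\Fpb$ and is therefore a twist of $E_0$. By the standard theory of twists (see \cite[Ch.~X]{silverman2009arithmetic}) there is a bijection
\[
  C_{p, j} \;\longleftrightarrow\; H^1\!\bigl(\Gal(\Fpb/\F_p),\, \Aut_{\Fpb}(E_0)\bigr),
\]
so the problem reduces to computing this cohomology group, which depends only on the group $\Aut_{\Fpb}(E_0)$ and the action on it of the Frobenius $\sigma$.

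First I would recall the structure of the geometric automorphism group in characteristic $p>3$: it is cyclic of order $2$ if $j\neq 0,1728$, of order $4$ if $j=1728$, and of order $6$ if $j=0$. Writing $M=\Aut_{\Fpb}(E_0)\cong\Z/n\Z$ with $n\in\{2,4,6\}$, I then need the Frobenius action. A generator of $M$ is given by an explicit automorphism whose formula involves a root of unity: $i$ with $i^2=-1$ in the case $n=4$, and $\zeta_3$ in the case $n=6$. Since $\sigma$ acts on these through $i\mapsto i^p$ and $\zeta_3\mapsto\zeta_3^p$, the action on $M$ is trivial precisely when the root of unity lies in $\F_p$ --- that is, when $p\equiv 1\pmod 4$ (for $n=4$) or $p\equiv 1\pmod 3$ (for $n=6$) --- and otherwise $\sigma$ sends the generator to its conjugate, which is its inverse, so $\sigma$ acts as inversion. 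For $n=2$ the action is always trivial.

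Next I would compute the cohomology. As $\Gal(\Fpb/\F_p)\cong\hat\Z$ is procyclic and topologically generated by $\sigma$, the standard formula for a finite module reads $H^1(\hat\Z,M)=\operatorname{coker}(\sigma-1)$, and since $M$ is finite we have $|\operatorname{coker}(\sigma-1)|=|\ker(\sigma-1)|=|M^\sigma|$; in words, the number of twists equals the number of automorphisms of $E_0$ already defined over $\F_p$. When $\sigma$ is trivial this gives $|M|=n$ classes, yielding $6$ for $(j=0,\ p\equiv 1\bmod 3)$ and $4$ for $(j=1728,\ p\equiv 1\bmod 4)$. When $\sigma$ is inversion, $\sigma-1$ is multiplication by $-2$ on $\Z/n\Z$, whose cokernel has order $2$; together with the case $n=2$ this accounts for the value $2$ in all remaining situations. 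Assembling these values reproduces exactly the three cases of the statement.

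The one point that genuinely requires care is the determination of the Frobenius action on $\Aut_{\Fpb}(E_0)$ for the two special $j$-values, i.e. deciding precisely when the extra automorphisms are $\F_p$-rational; this is where the congruences $p\equiv 1\pmod 4$ and $p\equiv 1\pmod 3$ enter, and everything else is immediate. If one prefers to avoid cohomology, the same count follows by writing the twists explicitly as Weierstrass models: they are parametrised by $\F_p^*/(\F_p^*)^2$ when $j\neq 0,1728$, by $\F_p^*/(\F_p^*)^4$ when $j=1728$, and by $\F_p^*/(\F_p^*)^6$ when $j=0$, and since $\F_p^*$ is cyclic of order $p-1$ the respective counts are $\gcd(2,p-1)=2$, $\gcd(4,p-1)$ and $\gcd(6,p-1)$, giving the same case distinction.
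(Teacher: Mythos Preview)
Your argument is correct. Both the cohomological count via $H^1(\hat\Z,\Aut_{\Fpb}(E_0))$ and the explicit parametrisation by $\F_p^*/(\F_p^*)^n$ are valid, and you have handled the Frobenius action on $\mu_n\cong\Aut_{\Fpb}(E_0)$ accurately in each case.

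As for the comparison: the paper does not actually prove this proposition. It is stated as following directly from \cite[Theorem~2.2]{broker2006constructing}, with no further argument given. Your write-up therefore supplies what the paper omits, and does so in a self-contained way. The cohomological route is the cleanest conceptual explanation (it makes transparent why the answer equals $|\Aut_{\F_p}(E_0)|$), while the $\F_p^*/(\F_p^*)^n$ count is closer in spirit to how the result is usually quoted in the literature and avoids any appeal to Galois cohomology. Either one would serve as a proof here; including both, as you do, is perhaps more than necessary but certainly not wrong.
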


Since we know that for an elliptic curve $E$ over $\F_p$ with
\begin{eqnarray*}
  j(E) = 0: \hspace{1em} E \text{ is supersingular}& \iff & p \equiv 2\pmod 3 \\
  j(E) = 1728:  \hspace{1em} E \text{ is supersingular} & \iff & p \equiv 3\pmod 4
\end{eqnarray*}
holds, we can deduce from Proposition~\ref{prop_number} that given a supersingular $j$-invariant $j$ there are always exactly two $\F_p$-isomorphism classes of elliptic curves over $\F_p$ with this $j$-invariant.

\begin{proposition} \label{prop:j-in-Fp}
  Let $p > 3$ and let $E$ be a supersingular elliptic curve over $\Fpb$. Then
  \begin{eqnarray*}
    E \text{ is defined over } \F_p \iff \Z[\sqrt{-p}]\subseteq \End E.
  \end{eqnarray*}
\end{proposition}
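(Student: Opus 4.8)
The plan is to prove the two implications separately; the forward direction is essentially immediate from Theorem~\ref{thm_end}, while the backward direction is where the real work lies, resting on the classification of degree-$p$ isogenies in characteristic $p$.

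For the implication ``$\Rightarrow$'' I would argue as follows. Suppose $E$ has a model over $\F_p$. Then its defining equation is fixed by the $p$-power map on coefficients, so the Frobenius twist satisfies $E^{(p)} = E$, and the $p$-power Frobenius isogeny $\pi_p \colon E \to E^{(p)} = E$ is an endomorphism of $E$, hence lies in $\End E$. Since $E$ is supersingular over $\F_p$ we are in Case~4 of Theorem~\ref{thm_end}, so $\pi_p$ has trace $0$ and characteristic polynomial $T^2 + p$; that is, $\pi_p^2 = -p$ in $\End E$. Sending $\sqrt{-p} \mapsto \pi_p$ therefore embeds $\Z[\sqrt{-p}]$ into $\End E$, giving the desired inclusion.

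For the implication ``$\Leftarrow$'' I would start from an element $\alpha \in \End E$ with $\alpha^2 = [-p]$. First I would compute $\deg\alpha$: the relation $\alpha^2 + p = 0$ shows $\alpha$ has trace $0$ and reduced norm $p$, so $\deg\alpha = p$. Next I would show $\alpha$ is purely inseparable. Its kernel lies inside $\ker\alpha^2 = \ker[-p] = E[p]$, and since $E$ is supersingular we have $E[p](\Fpb) = \{O\}$; a separable isogeny of degree $p$ would have an \'etale kernel with $p$ geometric points, which is impossible, so $\alpha$ has separable degree $1$ and is purely inseparable. Then I would invoke the structure theorem for purely inseparable isogenies of degree $p$: $\alpha$ factors as $\alpha = \iota \circ \pi_p$ with $\pi_p \colon E \to E^{(p)}$ the $p$-power Frobenius and $\iota \colon E^{(p)} \to E$ an isomorphism. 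Hence $E \cong E^{(p)}$, so $j(E) = j(E)^p$ and $j(E) \in \F_p$. Because every element of $\F_p$ is the $j$-invariant of some curve defined over $\F_p$, and $E$ is $\Fpb$-isomorphic to any such curve with the same $j$-invariant, $E$ admits a model over $\F_p$.

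The main obstacle is concentrated in the backward direction, namely the passage from ``$\End E$ contains a square root of $-p$'' to ``$E^{(p)} \cong E$''. The two inputs that make this work are (i) the absence of geometric $p$-torsion on a supersingular curve, which forces the degree-$p$ endomorphism $\alpha$ to be inseparable, and (ii) the fact that a purely inseparable degree-$p$ isogeny is the Frobenius up to isomorphism. A point to handle with care is the distinction between $j(E) \in \F_p$ and $E$ being defined over $\F_p$; these are equivalent here precisely because the proposition concerns $\Fpb$-isomorphism classes and any $j \in \F_p$ is realized by a curve over $\F_p$.
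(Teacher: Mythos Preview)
Your proof is correct and follows essentially the same route as the paper: the forward direction is immediate from the Frobenius endomorphism satisfying $\pi_p^2+p=0$, and for the backward direction you (like the paper) take a degree-$p$ endomorphism squaring to $-p$, use supersingularity to force it to be purely inseparable, factor it through the $p$-power Frobenius via Silverman's Corollary~II.2.12, and conclude $j(E)=j(E)^p\in\F_p$. Your write-up is in fact slightly more careful than the paper's in two places: you explicitly invoke Theorem~\ref{thm_end} for the trace-zero condition, and you address the distinction between $j(E)\in\F_p$ and $E$ having an $\F_p$-model.
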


\begin{proof}
  The implication $(\Rightarrow)$ is immediate since $\pi_p$ lies in $\End E$. To prove the implication $(\Leftarrow)$, let $\psi \in \End E$ satisfy $\psi^2 = [-p]$. Then $\psi$ is an isogeny of degree $p$ and $\widehat{\psi} \circ \psi = [p]$. Since $E$ is supersingular it follows that $\psi$ has kernel $\{ \O_E \}$ and so is inseparable.  Hence, by Corollary~II.2.12 of~\cite{silverman2009arithmetic} , $\phi$ composes as
\[
   E \stackrel{\pi}{\longrightarrow} E^{(p)} \stackrel{\lambda}{\longrightarrow} E
\]
where $\pi$ is the $p$-power Frobenius map and $E^{(p)}$ is the image curve of Frobenius.  Now $\deg( \lambda ) = 1$ and so $\lambda$ is an isomorphism. Hence, $j(E) = j( E^{(p)} ) = j(E)^p$.  Hence, $j(E) \in \F_p$.
\end{proof}

Now we want to give a connection between supersingular elliptic curves over $\F_p$ and certain elliptic curves in characteristic $0$. In the ordinary case, the Deuring Reduction Theorem gives a one-to-one correspondence preserving the endomorphism ring. In the supersingular case, since $\End_{\Fpb} E $ is too large, it is less clear how to construct such a correspondence.
But, for the case where $E$ is defined over $\F_p$ with $p > 3$, then we have seen that $\End_{\F_p} E $ is an order in the imaginary quadratic field $\Q( \sqrt{-p} )$. Hence, we can hope to get an analogous one-to-one correspondence.  We now show this is the case.

\begin{proposition}\label{lem:correspondence}
  There is a one-to-one correspondence
  \begin{eqnarray*}
    \left\{\begin{matrix}
      \text{supersingular elliptic} \\
      \text{curves over }\F_p
    \end{matrix}\right\}
    & \longleftrightarrow &
    \left\{\begin{matrix}
      \text{elliptic curves }E\text{ over } \C \\
      \text{with } \End E \in \{\Z[\sqrt{-p}], \O_K\}
    \end{matrix}\right\}.
  \end{eqnarray*}
\end{proposition}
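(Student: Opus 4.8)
The plan is to realise the correspondence concretely as reduction modulo $p$ of elliptic curves with complex multiplication, and then to promote it to a bijection by a counting argument. I begin on the characteristic-$0$ side: by the classical theory of complex multiplication, for a fixed order $\O$ in $K = \Q(\sqrt{-p})$ the elliptic curves over $\C$ with $\End E = \O$ are parametrised up to isomorphism by the ideal class group of $\O$, hence there are exactly $h(\O)$ of them. Applied to the two admissible orders, the right-hand set has cardinality $h(\Z[\sqrt{-p}]) + h(\O_K)$ when $p \equiv 3 \pmod 4$, and $h(\O_K)$ when $p \equiv 1 \pmod 4$ (since then floor and surface coincide).

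Next I would construct the map from right to left. Given $E/\C$ with $\End E = \O \in \{\Z[\sqrt{-p}], \O_K\}$, the curve and its endomorphisms are defined over the ring class field of $\O$; I choose a prime $\frakP$ above $p$ at which $E$ has good reduction and let $\bar E$ denote the reduction. Because $p \mid d_K$, the prime $p$ ramifies in $K$ and in particular does not split, so by Deuring's theory of reduction of CM curves $\bar E$ is supersingular. The element $\sqrt{-p} \in \O$ reduces to an endomorphism of $\bar E$, so $\Z[\sqrt{-p}] \subseteq \End \bar E$, and Proposition~\ref{prop:j-in-Fp} then forces $\bar E$ to be defined over $\F_p$. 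Hence reduction does land in the left-hand set.

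For the left-hand count I would use that, by Proposition~\ref{prop_number} together with the supersingularity criteria for $j = 0, 1728$, every supersingular $j$-invariant in $\F_p$ carries exactly two $\F_p$-isomorphism classes (its quadratic twists), so the left-hand set has cardinality $2\#S_p$. It then remains to check $2\#S_p = h(\Z[\sqrt{-p}]) + h(\O_K)$, which I would do via~\eqref{eq:Sp} and the conductor formula $h(\Z[\sqrt{-p}]) = 2\,h(\O_K)\bigl(1 - (\tfrac{d_K}{2})\tfrac12\bigr)$: the Kronecker symbol $(\tfrac{d_K}{2})$ equals $+1$ for $p \equiv 7 \pmod 8$ and $-1$ for $p \equiv 3 \pmod 8$, and a short case analysis in $p \bmod 8$ (together with the trivial case $p \equiv 1 \pmod 4$, where $2\#S_p = h(-4p) = h(\O_K)$) yields equality throughout. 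With two finite sets of equal cardinality and a well-defined map between them, it then suffices to prove the reduction map injective.

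The step I expect to be the main obstacle is controlling the reduction precisely enough to obtain injectivity. Two points must be pinned down. First, that the endomorphism ring is preserved, $\End_{\F_p} \bar E = \O$ rather than merely $\Z[\sqrt{-p}] \subseteq \End_{\F_p} \bar E \subseteq \O_K$: the delicacy is that the full geometric ring $\End_{\Fpb} \bar E$ is a maximal quaternion order, so the usual Deuring argument that endomorphisms inject and cannot enlarge must be carried out for the $\F_p$-rational subring alone. Second, that the reduction is well defined up to $\F_p$-isomorphism independently of the model and of $\frakP$, and that the two quadratic twists are matched with the correct characteristic-$0$ curves, since distinct complex $j$-invariants may collapse under reduction and the twist has then to be recovered from the $\F_p$-structure. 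Once injectivity is secured on the nose, equality of cardinalities upgrades it to the desired bijection; alternatively, surjectivity could be obtained directly from a lifting statement realising each supersingular curve over $\F_p$ as such a reduction.
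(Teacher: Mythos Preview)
Your overall architecture---define the map by reduction of CM curves modulo a prime above $p$, count both sides, and upgrade one direction to a bijection---is exactly the paper's. The counting is identical: $\#\Ell_p(\C)=2\#S_p$ via the conductor formula, and $\#\Ell(\F_p)=2\#S_p$ via Proposition~\ref{prop_number}.

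The substantive difference is which direction you prove first. You aim at \emph{injectivity} of the reduction map and correctly flag the two genuine obstacles: showing $\End_{\F_p}\bar E=\O$ on the nose (not merely $\O\subseteq\End_{\F_p}\bar E\subseteq\O_K$), and matching the two $\F_p$-twists with the correct complex curves. These are real difficulties---distinct complex $j$-invariants can and do collide modulo $p$, so injectivity at the level of $j$ fails and must be rescued via the $\F_p$-structure---and you do not actually resolve them.

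The paper sidesteps both issues by proving \emph{surjectivity} instead, via Deuring's lifting theorem applied to the pair $(\bar E,\bar\psi)$ with $\bar\psi\in\{\pi_p,(1+\pi_p)/2\}$. This lands in the correct order by construction and gives a lift reducing to $\bar E$ at \emph{some} place $\frakP$; transitivity of $\Gal(H/K)$ on places above $p$ then transports to the fixed place $\frakP_0$. Injectivity then falls out of the counting for free. This is precisely the ``alternative'' you mention in your last sentence, and it is the cleaner route: it never needs to track twists or prove endomorphism-ring preservation under reduction. One small point: for the map to be well defined at all you must \emph{fix} $\frakP$ from the outset (as the paper does with $\frakP_0$), not merely ``choose a prime $\frakP$'' per curve.
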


\begin{proof}
  Let $\Ell_p(\C)$ be the set of isomorphism classes of supersingular elliptic curves in characteristic $0$ with endomorphism ring $\O \in \left\{\Z[\sqrt{-p}], \O_K\right\}$. Every element in $\Ell_p(\C)$ corresponds to an ideal class in $\Cl(\O)$, so due to the observations above we get
  \begin{eqnarray*}
    \#\Ell_p(\C) & = & \sum _{\text{possible }\O}\#\Cl(\O) \\
    & = & \begin{cases}
            h(-4p) & \text{if } p \equiv 1 \pmod 4 \\
            h(-4p) + h(-p) & \text{if } p \equiv 3 \pmod 4
          \end{cases} \\
    & = & \begin{cases}
            h(-4p) & \text{if } p \equiv 1 \pmod 4 \\
            2h(-p) & \text{if } p \equiv 7 \pmod 8 \\
            4h(-p) & \text{if } p \equiv 3 \pmod 8
          \end{cases} \\
    & = & 2\#S_p.
  \end{eqnarray*}

  We also define $\Ell(\F_p)$ as the set of supersingular elliptic curves over $\F_p$ up to $\F_p$-isomorphism. We want to show that there is a bijective map
  \begin{eqnarray*}
    \Ell_p(\C) & \to & \Ell(\F_p) \\
    \left[E\right] & \mapsto & \left[\bar E\right]
  \end{eqnarray*}
  where $\bar E$ is the reduction of $E$ at some fixed place $\frakP_0$ over $p$.

  \begin{itemize}
    \item \textbf{Surjectivity:} \\
      Take a supersingular elliptic curve $\bar E$ over $\F_p$. Since Frobenius satisfies the polynomial $\pi_p^2 + p = 0$ it follows that $\End_{\F_p} \bar E$ contains the ring $\Z[\sqrt{-p}]$ or $\O_K$.
      Write this ring as $\Z[ \bar \psi ]$, so that $\bar \psi$ is either $\pi_p$ or $ (1 + \pi_p)/2$.
      Deuring's Lifting Theorem states that one can lift the pair $(\bar E, \bar \psi )$ to a pair $(E, \psi )$ where $E$ is an elliptic curve over some number field $H$ and $\psi \in \End( E )$ satisfies the same characteristic polynomial as $\bar \psi$. Indeed, $H$ is the Hilbert class field of $K$ or the ring class field of $\Z[\sqrt{-p}]$. Further, there is a place $\frakP$ of $H$ over $p$ such that the reduction of $E$ modulo $\frakP$ is isomorphic to $\bar E$.

      We want to show that reduction modulo a fixed place $\frakP_0$ of $H$ is surjective.  By Proposition~1.2 of~\cite{Tate67} there exists $\sigma \in \Gal( H/ K)$ such that $ \frakP^\sigma  = \frakP_0$ and so $E^\sigma$ reduces modulo $\frakP_0$ to the original curve $\bar E$.
      Hence, reduction modulo $\frakP_0$ is surjective.

    \item \textbf{Injectivity:} \\
      We see from equation~(\ref{eq:Sp}) and Proposition~\ref{prop_number} that $\#\Ell(\F_p) = 2\#S_p = \#\Ell_p(\C)$. Injectivity thus follows from surjectivity.
  \end{itemize}
\end{proof}

We re-inforce the fact that the correspondence of Proposition~\ref{lem:correspondence} is given by the Deuring lifting theorem: Given a supersingular elliptic curve $\bar E$ over $\F_p$ one performs Deuring lifting of the pair $(E, \psi )$ where $\psi = \pi_p$ or $\psi = (1 + \pi_p)/2 $.

It is important to see that isogenies behave well under this reduction. From Proposition 4.4 of \cite{silverman1994advanced} we know that reduction of isogenies is injective and preserves degrees. Furthermore we can show the following.

\begin{proposition}  \label{prop_rational}
  Let $\bar E_1, \bar E_2$ be supersingular elliptic curves over $\F_p$ and let $(E_1, \psi_1)$ and $( E_2, \psi_2 )$ be the Deuring lifts of $(\bar E_1, \pi_p)$ and $(\bar E_2, \pi_p)$ to characteristic $0$.
  Suppose there is an isogeny $\phi:E_1\to E_2$. Then the reduced isogeny $\bar\phi :\bar E_1\to\bar E_2$ is defined over $\F_p$.
\end{proposition}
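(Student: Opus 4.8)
The plan is to reduce the claim to the standard criterion that an isogeny between two elliptic curves defined over $\F_p$ is itself defined over $\F_p$ if and only if it is fixed by $\Gal(\Fpb/\F_p)$, equivalently, if and only if it commutes with the $p$-power Frobenius. So the whole proof amounts to establishing the single identity $\bar\phi \circ \pi_p = \pi_p \circ \bar\phi$, and everything is engineered to produce exactly this.

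First I would work in characteristic $0$. Since $\psi_1$ and $\psi_2$ are the Deuring lifts of $\pi_p$, they satisfy $\psi_i^2 = [-p]$, so under the analytic uniformisation $E_i = \C/L_i$ each $\psi_i$ is multiplication by a fixed square root of $-p$ inside $K = \Q(\sqrt{-p})$. Writing $\phi$ as multiplication by some $\lambda \in \C$, commutativity of multiplication in $\C$ immediately gives $\phi \circ \psi_1 = \psi_2 \circ \phi$ as maps $E_1 \to E_2$. I would stress that this step requires $\psi_1$ and $\psi_2$ to correspond to the \emph{same} square root of $-p$; that is the only non-formal input and I return to it below.

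Next I would reduce this identity modulo the fixed place $\frakP_0$. Reduction at a place of good reduction is injective, degree-preserving and respects composition, and by construction of the lift each $\psi_i$ reduces to the Frobenius endomorphism $\pi_p$ of $\bar E_i$. Hence $\bar\phi \circ \pi_p = \pi_p \circ \bar\phi$. On the other hand, Frobenius functoriality for morphisms over $\Fpb$ gives the general relation $\bar\phi^{(p)} \circ \pi_p = \pi_p \circ \bar\phi$, where $\bar\phi^{(p)}$ is the Galois conjugate of $\bar\phi$ (this is checked directly: for $\bar\phi$ given by rational functions, raising coordinates to the $p$-th power and conjugating coefficients agree). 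Combining the two gives $\bar\phi^{(p)} \circ \pi_p = \bar\phi \circ \pi_p$, and since $\pi_p$ is a nonconstant, hence surjective, morphism it is right-cancellable, so $\bar\phi^{(p)} = \bar\phi$. Thus $\bar\phi$ is Frobenius-invariant and therefore defined over $\F_p$.

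The main obstacle is the sign issue flagged above: I must rule out that $\psi_1$ and $\psi_2$ correspond to opposite square roots $\pm\sqrt{-p}$, since the opposite sign would propagate through the reduction to yield $\bar\phi^{(p)} = -\bar\phi$ rather than $\bar\phi^{(p)} = \bar\phi$, which is \emph{not} the rationality condition. This is precisely where fixing a single place $\frakP_0$ over $p$ (as in the correspondence of Proposition~\ref{lem:correspondence}) is used: the reduction of the CM action at $\frakP_0$ sends one well-defined element $\sqrt{-p} \in K$ to $\pi_p$ and the other, $-\sqrt{-p}$, to $\widehat{\pi_p}$, uniformly in the curve. This consistency across $E_1$ and $E_2$ secures $\phi \circ \psi_1 = \psi_2 \circ \phi$ with matching signs, after which the remainder of the argument is routine functoriality of reduction.
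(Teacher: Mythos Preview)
Your proof is correct and follows essentially the same approach as the paper: identify each $\psi_j$ with the same square root $i\sqrt{p}$ under the analytic uniformisation, use commutativity of multiplication in $\C$ to get $\phi\circ\psi_1=\psi_2\circ\phi$, reduce to obtain $\bar\phi\circ\pi_p=\pi_p\circ\bar\phi$, and conclude $\F_p$-rationality from Galois invariance. You are more explicit than the paper on two points it treats tersely --- the sign consistency of $\psi_1,\psi_2$ (which the paper handles in one clause, ``we may choose embeddings\ldots'') and the passage from commuting with $\pi_p$ to $\bar\phi^{(p)}=\bar\phi$ --- but the underlying argument is the same.
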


\begin{proof}
  We may choose embeddings $\End(E_j) \to \C$ so that $\psi_j $ is identified with $i\sqrt{p}$.
  Now every isogeny $\phi: E_1 \to E_2$ will satisfy $\phi \circ i \sqrt p = i \sqrt p \circ \phi$ since in characteristic $0$ the isogenies correspond to multiplication with a complex number. After reduction the isogeny $\bar\phi :E_1 \to E_2$ commutes with $\pi_p$. Since the Frobenius generates the Galois group we get ${\bar\phi}^\sigma = \bar\phi$ for all $\sigma\in\Gal(\bar\F_p/\F_p)$ and thus $\bar\phi$ is defined over $\F_p$.
\end{proof}

To describe the structure of $X(\F_p, \ell)$, we begin with a supersingular elliptic curve $\bar E$ over $\F_p$. As we have seen it has $\O := \End_{\F_p} \bar E \in\{\Z[\sqrt{-p}], \O_K\}$ in $K := \Q(\sqrt{-p})$, which is an order of discriminant $d_E\in\{-4p, -p\}$. Via the Deuring Lifting Theorem this elliptic curve can be lifted to an elliptic curve $E$ over some number field with $\End E = \O$.

A standard fact (see~\cite{kohel1996endomorphism} or Theorem~4 of~\cite{galbraith1999constructing}) is the following.
Let $K = \Q( \sqrt{-p} )$ have discriminant $d_K$ and let $E$ be an elliptic curve over $\C$ with $\End( E) = \O$ being an order in $K$.
Let $c = [ \O_K : \O ]$ be the conductor of $\O$, so that the discriminant of $\O$ is $c^2 d_K$.
Let $\ell $ be a prime. Then there are the possibilities
\begin{enumerate}
  \item
    $\ell\mid c$: one isogeny up and $\ell$ isogenies down,
  \item
    \begin{enumerate}
      \item $\ell$ \textbf{splits in} $K$:  two horizontal isogenies and $\ell-1$ isogenies down,
      \item $\ell$ \textbf{is ramified in} $K$:  one horizontal isogeny and $\ell$ isogenies down,
      \item $\ell$ \textbf{is inert in} $K$:  $\ell+1$ isogenies down.
    \end{enumerate}
\end{enumerate}

The structure of isogenies up and down is called a volcano.
For our case, the only possibilities are  $c =1$ and, if $p \equiv 3 \pmod{4}$, $c=2$.
Hence, the only prime of interest is $\ell = 2$, and when $p \equiv 3 \pmod{4}$ and $\End(E) = \Z[ \sqrt{-p} ]$ then we have one $2$-isogeny up and two $2$-isogenies down.

Using these results we can construct an infinite ``volcano'' of elliptic curves whose endomorphism rings are orders in $\Q( \sqrt{-p} )$.
One can then consider the reduction modulo $p$ of this volcano.   All the reduced curves are supersingular.
What happens is that most of the curves do not reduce to elliptic curves defined over $\F_p$, hence only a finite part of the volcano survives in the graph $X(\F_p, \ell)$.
Proposition~\ref{prop:j-in-Fp} explains why some isogenies down do not appear in the supersingular isogeny graph over $\F_p$. Some of the isogenies down are to orders like $\Z[\ell\sqrt{-p}]$ which do not contain $\sqrt{-p}$ anymore. So those reduced curves are not defined over $\F_p$ and do not show up as vertices in the graph $X(\F_p, \ell)$.

As shown in Proposition~\ref{prop_rational}, the isogenies reduce to $\F_p$-rational outgoing isogenies between supersingular elliptic curves over $\F_p$. It remains to be shown that every such isogeny can be reached in that way.

Let $E$ be a supersingular elliptic curve over $\F_p$, so the $\F_p$-rational $\ell$-isogenies correspond to Galois-invariant cyclic subgroups of $E[\ell]$. When we consider some prime $\ell\neq p$, $E[\ell]$ is a $2$-dimensional vector space over $\F_\ell$ and $\pi_p$ acts linearly on $E[\ell]$. Fixing a basis $\{P, Q\}$ for $E[\ell]$, the action of $\pi_p$ is represented by a $2\times2$ matrix. We also know that $\pi_p^2 + p = 0$, so the matrix satisfies that characteristic polynomial modulo $\ell$.

There are three cases for the quadratic polynomial modulo $\ell$:
\begin{itemize}
  \item it factors as $(\pi_p - a)^2$,
  \item it factors as $(\pi_p - a)(\pi_p - b)$ with $a \not\equiv b \pmod \ell$,
  \item it is irreducible.
\end{itemize}

Suppose there is a cyclic subgroup $G = \langle P\rangle$ of $E[\ell]$ with $\pi_p(G) = G$. Then it follows that $\pi_p(P) = [a]P$ in $G$ for some integer $a$. We have that the linear map $\pi_p$ has eigenspace $\langle P\rangle$ with eigenvalue $a$ and so the characteristic polynomial has a root $(\pi_p - a)$.  Hence, changing basis to use the point $P$ and some other point $Q$ it then follows by standard linear algebra that $\pi_p$ is represented either by the matrix $\begin{pmatrix} a & 0 \\ b & a \end{pmatrix}$ or $\begin{pmatrix} a & 0 \\ 0 & b \end{pmatrix}$.

We then deduce that the number of Galois-invariant cyclic subgroups of $E[\ell]$ is in the first case $1$ or $\ell+1$ depending on whether the matrix's lower left entry is $b \ne 0$ or $b = 0$. In the second case we have two of them and in the third there are none.

The polynomial $x^2 + p \pmod \ell$ can only have a repeated root for $\ell = 2$. When $b$ in the matrix equals $0$ resp. $1$ modulo $2$, we get three or one Galois-invariant subgroups of $E[2]$. We want to show that those possibilities occur in the right cases, so when $\End_{\F_p} E = \Z[\tfrac{-p+\pi_p}{2}]$ we have three outgoing $2$-isogenies and when $\End_{\F_p} E = \Z[\pi_p]$ there is one of them.

We have $b\equiv 0\pmod 2$ if and only if $\pi_p(P) = P$ and $\pi_p(Q) = Q$, so $E[2] = \ker([2])$ is included in $\ker( 1+\pi_p )$. Since the multiplication-by-$2$-map is separable, there exists an unique isogeny $\phi\in\End E$ such that $1+\pi_p = 2\phi$ due to \cite[Corollary III.4.11]{silverman2009arithmetic}. $\phi$ is $\F_p$-rational since it is a quotient of $\F_p$-rational maps and therefore $\phi\in\End_{\F_p} E$. So the above is equivalent to $\Z[\pi_p] \subsetneq \End_{\F_p} E$ as we wanted to show.

For any other $\ell$ we get no $\F_p$-rational isogenies when the polynomial is irreducible, or two cyclic Galois-invariant subgroups when it is split. Finally, we are only interested in $\ell$ such that $\left(\tfrac{-p}{\ell}\right) = 1$, since otherwise there are no prime ideals of norm $\ell$ in $\Z[\sqrt{-p}]$ and so there are no edges in that graph. In that case we can see that the polynomial always splits with two distinct roots.

If we compare these results with the structure of the graph in characteristic $0$, we have exactly the same number of outgoing $\ell$-isogenies from the elliptic curves in both graphs which correspond to each other under reduction. Since the isogenies in characteristic $0$ reduce to $\F_p$-rational isogenies, there is a correspondence

\vspace{-2ex}
\begin{eqnarray*}
  \left\{\begin{matrix}
    \F_p \text{-rational }\ell\text{-isogenies} \\
    \text{between supersingular} \\
    \text{elliptic curves over }\F_p
  \end{matrix}\right\}
  & \longleftrightarrow &
  \left\{\begin{matrix}
    \ell\text{-isogenies between} \\
    \text{elliptic curves }E\text{ over } \C \\
    \text{with } \End E \in \{\Z[\sqrt{-p}], \O_K\}
  \end{matrix}\right\}.
\end{eqnarray*}

Thus we can transfer the picture from characteristic $0$ exactly to our graphs $X(\F_p, \ell)$ and with these considerations we have described their structure completely. It can be summed up in the following way:

\begin{theorem}
  \label{thm_struct}
  Let $p>3$ be a prime.
  \begin{enumerate}
    \item
      $p \equiv 1 \pmod 4$: There are $h(-4p)$ $\F_p$-isomorphism classes of supersingular elliptic curves over $\F_p$, all having the same endomorphism ring $\Z[\sqrt{-p}]$. From every one there is one outgoing $\F_p$-rational horizontal $2$-isogeny as well as two horizontal $\ell$-isogenies for every prime $\ell>2$ with $\left(\tfrac{-p}{\ell}\right) = 1$.
    \item
      $p \equiv 3 \pmod 4$: There are two levels in the supersingular isogeny graph. From each vertex there are two horizontal $\ell$-isogenies for every prime $\ell>2$ with $\left(\tfrac{-p}{\ell}\right) = 1$.
      \begin{enumerate}
        \item
          If $p\equiv 7 \pmod 8$, on each level $h(-p)$ vertices are situated. Surface and floor are connected 1:1 with $2$-isogenies and on the surface we also have two horizontal $2$-isogenies from each vertex.
        \item
          If $p\equiv 3 \pmod 8$, we have $h(-p)$ vertices on the surface and $3h(-p)$ on the floor. Surface and floor are connected 1:3 with $2$-isogenies, and there are no horizontal $2$-isogenies.
      \end{enumerate}
  \end{enumerate}
\end{theorem}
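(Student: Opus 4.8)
The plan is to assemble the statement from the structural dictionary already set up, rather than to prove anything essentially new. By Proposition~\ref{lem:correspondence} the vertices of $X(\F_p,\ell)$ are identified with the isomorphism classes of characteristic-$0$ curves whose endomorphism ring is one of the two orders $\Z[\sqrt{-p}]$ and $\O_K$ in $K=\Q(\sqrt{-p})$, and by Proposition~\ref{prop_rational} together with the edge correspondence displayed just before the theorem the $\F_p$-rational $\ell$-isogenies match the $\ell$-isogenies of the characteristic-$0$ volcano supported on those two orders. So I would fix a prime $\ell$, read off the local shape at each vertex from the standard list of horizontal/up/down possibilities, and then discard exactly those characteristic-$0$ isogenies whose target order fails to contain $\sqrt{-p}$, since by Proposition~\ref{prop:j-in-Fp} these are precisely the edges that do not survive reduction to $\F_p$. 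The vertex counts are then the sizes of the relevant ideal class groups.

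First I would treat $p\equiv 1\pmod 4$. Here $d_K=-4p$ and $\Z[\sqrt{-p}]=\O_K$, so there is a single level and every vertex has endomorphism ring $\O_K$ with conductor $c=1$; the count $h(-4p)$ is immediate from $\#\Ell(\F_p)=2\#S_p$ and equation~(\ref{eq:Sp}). For the edges: $2\mid d_K$, so $2$ is ramified, giving one horizontal $2$-isogeny and two down-isogenies, the latter landing in $\Z[2\sqrt{-p}]$, which does not contain $\sqrt{-p}$ and hence vanish after reduction. For odd $\ell$ with $\left(\tfrac{-p}{\ell}\right)=1$ the prime splits, giving two horizontal and $\ell-1$ down; again only the two horizontal survive.

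Next, for $p\equiv 3\pmod 4$ there are two orders, $\O_K$ (surface, $c=1$) and $\Z[\sqrt{-p}]$ (floor, $c=2$). For odd $\ell$ with $\left(\tfrac{-p}{\ell}\right)=1$ we have $\ell\nmid c$, so the splitting is governed by $\O_K$ and is identical at both levels: two horizontal isogenies survive and the down ones disappear. The interesting prime is $\ell=2$, which I would split by $p\bmod 8$. From the floor ($2\mid c$) one always gets one isogeny up to the surface, whose target $\O_K$ contains $\sqrt{-p}$ and which therefore survives, and two down-isogenies into $\Z[2\sqrt{-p}]$ that vanish. From the surface ($2\nmid c$) the shape depends on how $2$ splits in $K$: when $p\equiv 7\pmod 8$ we have $-p\equiv 1\pmod 8$, so $2$ splits, giving two horizontal $2$-isogenies together with one down-isogeny to the floor; when $p\equiv 3\pmod 8$ we have $-p\equiv 5\pmod 8$, so $2$ is inert, giving three down-isogenies to the floor and no horizontal $2$-isogenies.

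Finally I would pin down the vertex counts, which is where the real bookkeeping lives and which I expect to be the main obstacle. The surface is in bijection with $\Cl(\O_K)$ and so carries $h(-p)$ vertices, while the floor is in bijection with $\Cl(\Z[\sqrt{-p}])$ and carries $h(-4p)$ vertices, so the claimed figures amount to $h(-4p)=h(-p)$ when $p\equiv 7\pmod 8$ and $h(-4p)=3h(-p)$ when $p\equiv 3\pmod 8$. The cleanest route is to combine the total $\#\Ell(\F_p)=2\#S_p$ with equation~(\ref{eq:Sp}) and subtract the surface count $h(-p)$; alternatively one invokes the class-number formula for the order of conductor $2$, for which the ratio $h(-4p)/h(-p)$ equals $2-\left(\tfrac{-p}{2}\right)$ and evaluates to $1$ or $3$ according to the Kronecker symbol $\left(\tfrac{-p}{2}\right)=\pm 1$. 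The connection ratios then follow by equating edges across the two levels: with $h(-p)$ surface and $h(-p)$ floor vertices in the $p\equiv 7\pmod 8$ case the single up- and down-edges form a perfect matching, giving the 1:1 connection, whereas with $h(-p)$ surface and $3h(-p)$ floor vertices in the $p\equiv 3\pmod 8$ case the three down-edges per surface vertex against one up-edge per floor vertex force the 1:3 connection. The delicate points are precisely this class-number relation for the non-maximal order and the verification that the pruned edges are exactly those whose target order loses $\sqrt{-p}$.
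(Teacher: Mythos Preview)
Your proposal is correct and follows essentially the same route as the paper: both arguments identify the vertices via Proposition~\ref{lem:correspondence}, transport the characteristic-$0$ volcano picture through the edge correspondence (Proposition~\ref{prop_rational} and the Galois-invariant subgroup count), prune exactly the down-isogenies whose target order fails to contain $\sqrt{-p}$ via Proposition~\ref{prop:j-in-Fp}, and then read off the case-by-case structure from the splitting behaviour of $2$ in $K$. Your treatment of the vertex counts via the conductor-$2$ class-number relation $h(-4p)/h(-p)=2-\left(\tfrac{-p}{2}\right)$ is a slightly cleaner packaging than the paper's, which leaves this implicit in equation~(\ref{eq:Sp}) and the proof of Proposition~\ref{lem:correspondence}.
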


This provides a structure analogous to the one for the ordinary isogeny volcano, only that in our case we have no more than two levels and for $\ell>2$ only exactly two outgoing isogenies from each elliptic curve (if any). If $p\equiv 1\pmod 4$, there is only one $2$-isogeny starting from every vertex, whereas for $p\equiv 3\pmod 4$ we get two more when the elliptic curve is on the surface. Examples of all three cases are given in the Appendix. This result can be used to adapt the algorithms from the ordinary case that rely on the volcano structure. We will investigate one of them briefly in the next section.

\section{The Supersingular Isogeny Problem}
\label{section_iso_prob}

We have seen in the last section that there is a connection between supersingular elliptic curves over $\F_p$ with $\F_p$-rational endomorphism ring $\O$ and the ideal class group $\Cl(\O)$. We have used this information to discover an elegant structure for the isogeny graph of supersingular elliptic curves over $\F_p$.
Now we want to use this information to solve the isogeny problem.

Hence, let $E_0$ and $E_1$ be supersingular elliptic curves over $\F_p$, where $p > 3$.
As we have shown, every such elliptic curve corresponds to an ideal class in $\Cl(\O)$. Furthermore any rational $\ell$-isogeny from such an elliptic curve relates to an ideal of norm $\ell$ in $\Cl(\O)$. More precisely, if $E$ is represented by the ideal $\mathfrak a$ and the isogeny $\phi$ by the ideal $\mathfrak b$ with $\Norm(\mathfrak b) = \ell$, then the image curve $E'$ corresponds to the ideal $\mathfrak b^{-1}\cdot\mathfrak a$.

Due to a result from Bach~\cite{bach1985analytic}, using GRH, the ideals of norm less than or equal to $6\log(|d|)^2$ (where $d$ is the discriminant of $\O$) generate the ideal class group $\Cl(\O)$. Therefore we know that the supersingular isogeny graph is connected when we use all isogenies of prime degree up to $6\log(|d|)^2$. Usually we do not need all of those degrees and take a moderately small bound $B\leq 6 \log|d|^2$. One can determine in subexponential time whether a set of ideals generates the ideal class group. We set
\begin{eqnarray*}
  L & := & \left\{\text{primes }\ell < B \mid \left(\tfrac{-p}{\ell}\right) = 1\right\}.
\end{eqnarray*}

The condition $\left(\tfrac{-p}{\ell}\right) = 1$ comes from the fact that only in these cases there exist $\F_p$-rational $\ell$-isogenies between supersingular elliptic curves over $\F_p$, as shown in Theorem~\ref{thm_struct}.

Since we know about the volcano-like structure now, it is possible to adapt the usual ordinary-case-algorithm~\cite{galbraith2002extending} to this setting.
First we identify the endomorphism ring of the initial curves using Theorem~\ref{thm_struct} as in Kohel's algorithm.  If necessary we then take 2-isogenies so that both curves lie on the surface.
From now on we assume we have two supersingular elliptic curves over $\F_p$ with the same $\F_p$-rational endomorphism ring $\O_K$. From both vertices we perform a breadth-first search (or a random walk in a lower storage version) in the graph $X(\F_p, L)$ whose edges are isogenies of degree $\ell\leq B $. Since the graph is connected for a big enough bound $B\leq 6 \log(|d|)^2$, the algorithm invariably finds a path between the two vertices representing the elliptic curves. It is not hard to compute the whole isogeny as composition of small degree isogenies after that.

A very basic version of the high-storage bi-directional-search algorithm for computing a path in the subgraph of $X(\bar\F_p, L)$ is given as Algorithm~\ref{alg_path}.
By the  birthday paradox, the heuristic running time of the algorithm is $\tilde{O}( p^{1/4} )$ binary operations.

\begin{algorithm}[H]
  \caption{\ }
  \label{alg_path}
  \begin{algorithmic}[1]
    \REQUIRE Supersingular elliptic curves $E_0$, $E_1$ over $\F_p$, some bound $B \leq 6\log(|d|)^2$
    \STATE $S \gets [\ ]$
    \STATE Take vertical $2$-isogenies (if required) so that $E_0$ and $E_1$ are on the surface.
    \STATE $L \gets \left\{\text{primes }\ell < B \mid \left(\tfrac{-p}{\ell}\right) = 1\right\}$
    \STATE $S_0 \gets [j(E_0)]$, $S_1 \gets [j(E_1)]$
    \STATE $disjoint \gets \texttt{true}$
    \STATE $i \gets 0$
    \WHILE{$disjoint$}
      \STATE $\ell \stackrel{R}{\gets} L$
      \STATE $\Psi \gets \texttt{ModularPolynomial}(\ell)$
      \STATE $j \stackrel{R}{\gets} \texttt{Roots}(\Psi(X, j(E_i)))$
      \STATE $\texttt{Append}(S_i, j)$
      \IF{$j \in S_{1-i}$}
        \STATE $disjoint \gets \texttt{false}$
        \STATE $S \gets \texttt{Cat}(S_0[1, \dots, \texttt{Index}(S_0, j)], S_{1}[\texttt{Index}(S_1, j), \dots,  1])$
      \ENDIF
      \STATE $i \gets 1-i$
    \ENDWHILE
    \ENSURE A path $S$ in $X(\F_p, L)$ from $j(E_1)$ to $j(E_2)$
  \end{algorithmic}
\end{algorithm}

\begin{remark}
  $ \ \ $

  \noindent 1. Recall that the  supersingular isogeny graph $X( \F_p, L )$ has the property that each curve and its non-trivial quadratic twist give two distinct vertices.
  Hence, the graph is in some sense twice as large as we would like.
  Hence, in practice it is more convenient to forget about the non-isomorphic twists and just work with the $j$-invariants. This halves the number of vertices and furthermore we can use precomputed modular polynomials instead of computing the division polynomial of each elliptic curve in the chain. The resulting isogeny from $E_0$ can map to a quadratic twist of $E_1$, in which case we simply compose with a suitable isomorphism.

  \noindent 2. When $p \equiv 7 \pmod{8}$ one can use the prime $2$ in $L$, though one must be careful to identify which on of the three outgoing $2$-isogenies is actually going down to the floor.

  \noindent 3. There are many possible points of improvement like preferring small primes $\ell$ and using them more often~\cite{galbraith2011improved}, but to keep it simple they are omitted in this pseudo code.

  \noindent 4. A better algorithm would use Pollard-style random walks (i.e., walks that are deterministic and memoryless, so that when two walks collide they follow the same path from that point onwards) and distinguished points.  The details of such algorithms are given in~\cite{galbraith2002extending,galbraith2011improved}.
\end{remark}

We implemented \textsc{Algorithm}~\ref{alg_path} in \texttt{MAGMA}, as well as the standard high-storage bi-directional-search algorithm using the full graph $X( \Fpb, 2)$ for comparison. Table~\ref{table_results} shows the results of those computations. For each bit length we took ten random primes $p$ and for each prime selected $50$ random pairs of $j$-invariants in $\F_p$. The average lengths of a path in $X(\Fpb, 2)$ resp. $X(\F_p, L)$ for $L = \left\{\text{primes }\ell < 20 \mid \left(\tfrac{-p}{\ell}\right) = 1\right\}$ between the same pairs and the corresponding average CPU time in seconds are displayed.
The improvement from our new ideas is clear.

\begin{table}[H]

  \caption{Comparison of the average path length and running time for the bi-directional search algorithms in the full graph $X(\Fpb, 2)$ and the graph $X(\F_p, L)$ for random pairs of $j$-invariants in $\F_p$.}
  \begin{tabular}{c | c | c | c | c}
  \label{table_results}
      & \multicolumn{2}{c |}{path length} & \multicolumn{2}{c}{CPU time (seconds)} \\
    $p$ & $X(\Fpb, 2)$ & $X(\F_p, L)$ & $X(\Fpb, 2)$ & $X(\F_p, L)$ \\
    \hline \hline
    $16$-bit & 178 & 12 & 0.084 & 0.018 \\
    $20$-bit & 801 & 31 & 0.380 & 0.029 \\
    $24$-bit & 3234 & 51 & 2.021 & 0.083 \\
    $28$-bit & 13040 & 129 & 18.516 & 0.303 \\
    $32$-bit & 53118 & 235 & 325.852 & 0.720
  \end{tabular}
\end{table}

\section{The General Isogeny Problem}
\label{section_on}

We now consider the general isogeny problem: Given two supersingular elliptic curves $E_0$ and $E_1$ over $\Fpb$, to construct an isogeny between them.
We desire an algorithm that is easily distributed, that requires low storage, and that has total running time of $\tilde{O}( p^{1/2} )$ bit operations.

Such an algorithm can be developed using Pollard-style pseudorandom walks in the full graph, but the experience of the second author is that it is rather troublesome to implement, and the implied constants in the $\tilde{O}$ are poor.
Instead, we now have a much simpler approach: Run random walks in the graph from $E_0$ and $E_1$ until we hit a supersingular curve defined over $\F_p$. This step should require $\tilde{O}(p^{1/2})$ steps.  Then apply the new isogeny algorithm for supersingular elliptic curves over $\F_p$, which only requires $\tilde{O}(p^{1/4})$ steps.
The crucial point is that the first stage can be done with simple self-avoiding random walks -- rather than the much more difficult stateless Pollard-style walks.

In more detail, given $P$ processors one runs $P/2$ processors starting from each of $E_0$ and $E_1$ performing truly random self-avoiding walks (meaning that one remembers the current $j$-invariant $j_c$ and the previous $j$-invariant $j_p$, and at each step one chooses uniformly at random one of the roots $\Phi_\ell( j_c, Y)/(Y - j_p)$).
One could even instruct each of the processors to take a distinct path for the first $k = O( \log_2( P/2 ))$ steps (essentially computing distinct hash values on $k$-bit strings using the hash function of~\cite{charles2009cryptographic}).
Since the graph is an expander, we expect the walks to quickly be sampling uniformly from the graph, and so we expect to select a vertex in the subset of $j \in \F_p$ with probability approximately $p^{1/2} /p =  1/p^{1/2} $.
Alternatively, since the diameter is small, there should be a short path to the subset of $j \in \F_p$ of length $\tfrac{1}{2} \log(p )$ so one could distribute a depth-first search through all short paths from $E_0$.
In any case, we expect the first phase to be easily distributed, require little storage, and require total effort $\tilde{O}( p^{1/2} )$ bit operations (or total elapsed time $\tilde{O}( p^{1/2} /P)$ if we have $P$ processors of equal power).
The second stage has no effect on the asymptotic running time.

It is clear that this algorithm is much simpler to implement, and will have superior performance, to the approach using Pollard-style random walks in the full graph.
However, there is one disadvantage: The large storage or Pollard-style algorithms can work in the graph $X( \Fpb,  2  )$ and will find a sequence of $2$-isogenies from $E_0$ to $E_1$.  On the other hand, our algorithm in Section~\ref{section_iso_prob} for the subproblem where $j \in \F_p$ typically requires more primes, so the resulting isogeny is not a sequence of $2$-isogenies.  It is an open problem to transform an isogeny into a sequence of $2$-isogenies in the supersingular case (in the ordinary case this problem is the subgroup membership problem and discrete logarithm problem in the ideal class group).

\section*{Acknowledgements}

We thank David Kohel and Drew Sutherland for helpful conversations and Marco Streng for the idea of the proof of Proposition~\ref{prop_rational}.

Working on this paper started during a visit of the first author at the University of Auckland which was partially funded by a DAAD scholarship for PhD students.

\bibliographystyle{alpha} 
\bibliography{references}

\appendix
\section{Example Graphs}

We present a few small examples of the irregular structure of the full supersingular isogeny graph $X(\bar\F_p, \ell)$. After that we display, for the same examples, the graphs $X(\F_{p}, \ell)$ which have a much more regular structure. For the examples we use the primes $p = 83, 101$ and $103$, one for each of the different cases that occur. To demonstrate the two occurring structures we build the graphs for isogeny degrees $\ell=2$ and the smallest prime $\ell>2$ in each case for that isogenies exist.

Note that for $j(E) = 0$ resp. $j(E) \equiv 1728 \pmod p$ there are three resp. two non-equivalent isogenies mapping from $E$ to another curve $E'$. but their dual isogenies are all equivalent. This is due to the fact that $\#\Aut(E) =6$ resp. $\#\Aut(E) = 4$ in these cases. If $\phi:E\to E'$ is an isogeny and $\rho\in\Aut(E)$, then $\phi \circ \rho$ may not be equivalent (i.e., have the same kernel) as $\psi$, whereas the dual of $\phi\circ\rho$ is $\hat{\rho}\circ\hat{\phi}$, so this is equivalent to the dual of $\phi$. We denote these multiple isogenies in the graph using a single arrow together with an integer to indicate the multiplicity.

\subsection{An Example for $p\equiv 1 \pmod 4$}
\label{app_1mod4}

If we take $p=101$, we expect $\lfloor \tfrac{101}{12} \rfloor +1 = 9$ supersingular $j$-invariants in $\F_{p^2}$. In the next figure we show how they are connected using $2$-isogenies. The nodes labeled $\alpha$ and $\bar\alpha$ represent $j$-invariants in $\F_{p^2}\setminus\F_p$ where $\bar\alpha$ is the conjugate of $\alpha$. The graph can be easily computed with help of modular polynomials.

\begin{figure}[H]
  \caption{Supersingular Isogeny Graph $X(\bar \F_{101}, 2)$}
  \vspace{-4ex}
  \begin{displaymath}
    \xymatrix {
      & &
      \alpha \ar@<+.7ex>[rd] \ar@<+.7ex>[ld] \ar@<+.7ex>[d] & & & &
      \\
      0 \ar@<+.7ex>[r]|3 &
      66 \ar@<+.7ex>[l] \ar@<+.7ex>[rd] \ar@<+.7ex>[ru] &
      21 \ar@<+.7ex>[d] \ar@<+.7ex>[u] \ar@(dr, ur) &
      57 \ar@<+.7ex>[r] \ar@<+.7ex>[ld] \ar@<+.7ex>[lu] &
      64 \ar@<+.7ex>[l] \ar@<+.7ex>[r] \ar@/^1pc/[r] &
      3 \ar@<+.7ex>[l] \ar@<+.7ex>[r] \ar@/^1pc/[l]&
      59 \ar@<+.7ex>[l] \ar@(dl, dr) \ar@(ur, ul)
      \\
      & &
      \bar\alpha \ar@<+.7ex>[ru] \ar@<+.7ex>[lu] \ar@<+.7ex>[u] & & & &
    }
  \end{displaymath}
  \label{full101-2}
\end{figure}
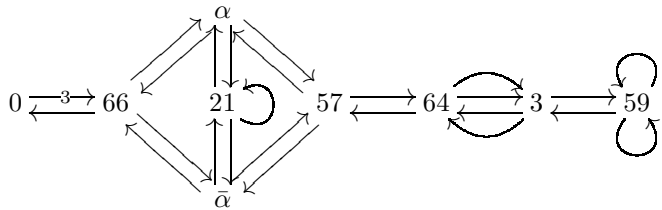

In $X(\F_p, \ell)$ we will have $h(-4p) = 14$ nodes which are supersingular elliptic curves over $\F_p$ with endomorphism ring $\Z[\sqrt{-101}]$. There will be only one outgoing $2$-isogeny from each curve, so naturally the graph can not be connected. It can be seen in the following figure.

\begin{figure}[H]
  \caption{$\F_p$-Rational Supersingular Isogeny Graph $X(\F_{101}, 2)$}
  \vspace{-4ex}
  \begin{displaymath}
    \xymatrix @R=.5pc {
      {\color{blue}   0 } \ar@<.7ex>[r] &
      {\color{blue}   66} \ar@<.7ex>[l] & & &
      {\color{blue}   57} \ar@<.7ex>[r] &
      {\color{blue}   64} \ar@<.7ex>[l] &
      {\color{blue}   3 } \ar@<.7ex>[r] &
      {\color{blue}   59} \ar@<.7ex>[l]
      \\
      & &
      {\color{black}  21} \ar@<.7ex>[r] &
      {\color{black}  21} \ar@<.7ex>[l]
      \\
      {\color{purple} 0 } \ar@<.7ex>[r] &
      {\color{purple} 66} \ar@<.7ex>[l] & & &
      {\color{purple} 57} \ar@<.7ex>[r] &
      {\color{purple} 64} \ar@<.7ex>[l] &
      {\color{purple} 3 } \ar@<.7ex>[r] &
      {\color{purple} 59} \ar@<.7ex>[l]
    }
  \end{displaymath}
  \label{rational101-2}
\end{figure}
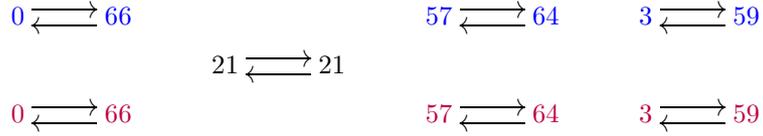

It is notable that in this graph there are fewer connecting isogenies than in the full graph before. For example, in the first graph we have two isogenies going from the node $64$ to the node $3$ and two ones back, which are all missing in the new graph. This is due to the fact that those isogenies are not defined over $\F_p$, so they are not computed as edges in $X(\F_p, 2)$. Likewise the two loops from $59$ to itself are isogenies over $\F_{p^2}$ that are dual to each other, whereas the loop at $21$ is a $\F_p$-rational isogeny that is its own dual.

For higher isogeny degrees the number of outgoing isogenies from each vertex grows, so the graph becomes more complicated to draw. For this example we can take $\ell=3$ since $\left(\tfrac{-p}{3}\right) = 1$.

\begin{figure}[H]
  \caption{Supersingular Isogeny Graph $X(\bar \F_{101}, 3)$\label{full101-3}}
  \vspace{-2ex}
  \begin{displaymath}
    \xymatrix {
      &
      \bar\alpha \ar@<+.7ex>[rr] \ar@<+.7ex>[dd] \ar@<+.7ex>[dr] \ar@/^1pc/[dr] & &
      59 \ar@<+.7ex>[ll] \ar@<+.7ex>[r] \ar@<+.7ex>[d] \ar@<+.7ex>[dl] &
      57 \ar@<+.7ex>[l] \ar@<+.7ex>[r] \ar@(ur, ul) \ar@(dl, dr) &
      66 \ar@<+.7ex>[l] \ar@<+.7ex>[ddll] \ar@<+.7ex>@/^1pc/[ddll] \ar@(dr, ur)
      \\
      & &
      \alpha \ar@<+.7ex>[ul] \ar@/^1pc/[ul] \ar@<+.7ex>[dl] \ar@<+.7ex>[ur]
      &
      21 \ar@<+.7ex>[u] \ar@<+.7ex>[d] \ar@(ul, dl) \ar@(dr, ur)
      \\
      0  \ar@<+.7ex>[r]|3 \ar@(ul, dl)  &
      64 \ar@<+.7ex>[l] \ar@<+.7ex>[uu] \ar@<+.7ex>[ur] \ar@<+.7ex>[rr] & &
      3 \ar@<+.7ex>[u] \ar@<+.7ex>[ll] \ar@<+.7ex>[uurr] \ar@<+.7ex>@/_1pc/[uurr]
      \\
    }
  \end{displaymath}
\end{figure}
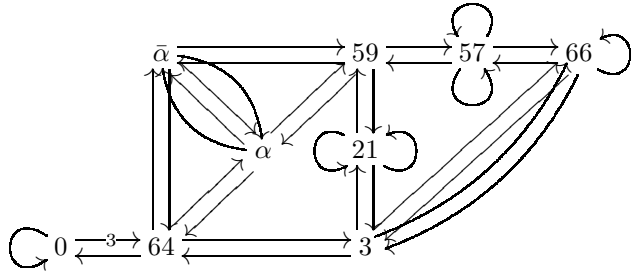

Despite the complicated picture of the full graph, the graph over $\F_p$ becomes just a big circle. In particular, it is already fully connected. This is because the ideal class group of $\Q( \sqrt{-101} )$ is generated by a prime ideal of norm $3$.

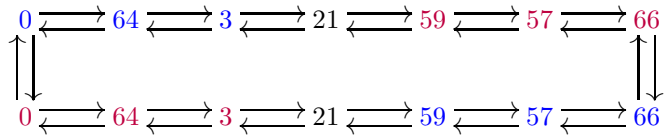
\begin{figure}[H]
  \caption{Rational Supersingular Isogeny Graph $X(\F_{101}, 3)$\label{rational101-3}}
  \vspace{-4ex}
  \begin{displaymath}
    \xymatrix {
      {\color{blue}   0 } \ar@<+.7ex>[r] \ar@<+.7ex>[d] &
      {\color{blue}   64} \ar@<+.7ex>[r] \ar@<+.7ex>[l] &
      {\color{blue}   3 } \ar@<+.7ex>[r] \ar@<+.7ex>[l] &
      {\color{black}  21} \ar@<+.7ex>[r] \ar@<+.7ex>[l] &
      {\color{purple} 59} \ar@<+.7ex>[r] \ar@<+.7ex>[l] &
      {\color{purple} 57} \ar@<+.7ex>[r] \ar@<+.7ex>[l] &
      {\color{purple} 66} \ar@<+.7ex>[d] \ar@<+.7ex>[l]
      \\
      {\color{purple} 0 } \ar@<+.7ex>[r] \ar@<+.7ex>[u] &
      {\color{purple} 64} \ar@<+.7ex>[r] \ar@<+.7ex>[l] &
      {\color{purple} 3 } \ar@<+.7ex>[r] \ar@<+.7ex>[l] &
      {\color{black}  21} \ar@<+.7ex>[r] \ar@<+.7ex>[l] &
      {\color{blue}   59} \ar@<+.7ex>[r] \ar@<+.7ex>[l] &
      {\color{blue}   57} \ar@<+.7ex>[r] \ar@<+.7ex>[l] &
      {\color{blue}   66} \ar@<+.7ex>[u] \ar@<+.7ex>[l]
    }
  \end{displaymath}
\end{figure}

Again you can see how the isogenies from the full graph that are defined over $\F_{p^2}$ vanish in the rational graph, and the single loops become isogenies from an elliptic curve to its quadratic twist. This latter fact can be shown in general.

\subsection{An Example for $p\equiv 3 \pmod 8$}
\label{app_3mod8}

For this case we take $p=83$, so the full graph will have $\lfloor\frac{83}{12}\rfloor + 2 = 8$ vertices. Again we have two $j$-invariants $\alpha, \bar\alpha \in \F_{p^2}\setminus\F_p$. The full $2$-isogeny graph has the following structure.

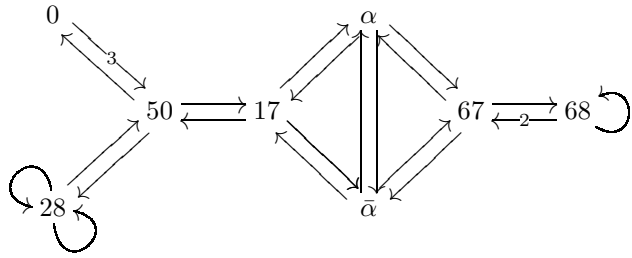
\begin{figure}[H]
  \caption{Supersingular Isogeny Graph $X(\bar \F_{83}, 2)$\label{full83_2}}
  \vspace{-4ex}
  \begin{displaymath}
    \xymatrix {
      0 \ar@<+.7ex>[rd]|
      3 & & &
      \alpha \ar@<+.7ex>[rd] \ar@<+.7ex>[ld] \ar@<+.7ex>[dd]& &
      \\
      & 50 \ar@<+.7ex>[ld] \ar@<+.7ex>[lu] \ar@<+.7ex>[r] &
      17 \ar@<+.7ex>[l] \ar@<+.7ex>[rd]\ar@<+.7ex>[rd] \ar@<+.7ex>[ru]& &
      67 \ar@<+.7ex>[ld] \ar@<+.7ex>[lu] \ar@<+.7ex>[r]&
      68 \ar@<+.7ex>[l]|2 \ar@(dr, ur)
      \\
      28 \ar@<+.7ex>[ru] \ar@(u, l) \ar@(d, r) & & &
      \bar\alpha \ar@<+.7ex>[ru] \ar@<+.7ex>[lu] \ar@<+.7ex>[uu] & &
    }
  \end{displaymath}
\end{figure}

In the graph over $\F_p$ we get $h(-p) = 3$ supersingular elliptic curves on the surface and $h(-4p)=9$ ones on the floor. In the next figure we can see how $2$-isogenies connect floor and surface as explained in case (2)(b) of Theorem~\ref{thm_struct}.

\begin{figure}[H]
  \caption{Rational Supersingular Isogeny Graph $X(\F_{83}, 2)$\label{rational83_2}}
  \vspace{-4ex}
  \begin{displaymath}
    \xymatrix {
      &
      {\color{blue}   50} \ar@<.7ex>[d] \ar@<.7ex>[dr] \ar@<.7ex>[dl] & & &
      {\color{purple} 50} \ar@<.7ex>[d] \ar@<.7ex>[dr] \ar@<.7ex>[dl] & & &
      {\color{black}  68} \ar@<.7ex>[d] \ar@<.7ex>[dr] \ar@<.7ex>[dl]
      \\
      {\color{blue}   0} \ar@<.7ex>[ur]&
      {\color{blue}   17} \ar@<.7ex>[u]&
      {\color{blue}   28} \ar@<.7ex>[ul]&
      {\color{purple} 0} \ar@<.7ex>[ur]&
      {\color{purple} 17} \ar@<.7ex>[u]&
      {\color{purple} 28} \ar@<.7ex>[ul]&
      {\color{black}  67} \ar@<.7ex>[ur]&
      {\color{black}  67} \ar@<.7ex>[u]&
      {\color{black}  68} \ar@<.7ex>[ul]
    }
  \end{displaymath}
\end{figure}
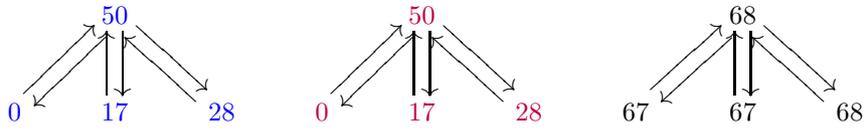

If we repeat the procedure for $\ell=3$, the full graph looks like this.

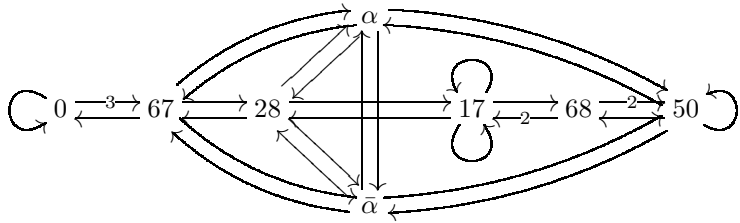
\begin{figure}[H]
  \caption{Supersingular Isogeny Graph $X(\bar \F_{83}, 3)$\label{full83_3}}
  \vspace{-4ex}
  \begin{displaymath}
    \xymatrix {
      & & &
      \alpha \ar@<+.7ex>[dd] \ar@<+.7ex>@/_.7pc/[dll] \ar@<+.7ex>[dl] \ar@<+.7ex>@/^.7pc/[drrr]
      \\
      0  \ar@(ul, dl) \ar@<+.7ex>[r]|3 &
      67 \ar@<+.7ex>[l] \ar@<+.7ex>[r] \ar@<+.7ex>@/_.7pc/[drr] \ar@<+.7ex>@/^.7pc/[urr] &
      28 \ar@<+.7ex>[l] \ar@<+.7ex>[rr] \ar@<+.7ex>[dr] \ar@<+.7ex>[ur] & &
      17 \ar@<+.7ex>[ll] \ar@<+.7ex>[r] \ar@(ur, ul) \ar@(dl, dr) &
      68 \ar@<+.7ex>[l]|2 \ar@<+.7ex>[r]|2 &
      50 \ar@<+.7ex>[l] \ar@(dr, ur) \ar@<+.7ex>@/^.7pc/[dlll] \ar@<+.7ex>@/_.7pc/[ulll]
      \\
      & & &
      \bar\alpha \ar@<+.7ex>[uu] \ar@<+.7ex>[ul] \ar@<+.7ex>@/^.7pc/[ull] \ar@<+.7ex>@/_.7pc/[urrr]
    }
  \end{displaymath}
\end{figure}

And in the graph over $\F_p$ we get two isogeny circles, one on the floor and one on the surface.

\begin{figure}[H]
  \caption{Rational Supersingular Isogeny Graph $X(\F_{83}, 3)$\label{rational83_3}}
  \vspace{-4ex}
  \begin{displaymath}
    \xymatrix @R=.5pc {
      & & &
      {\color{black}  68} \ar@<+.7ex>[ddr] \ar@<+.7ex>[ddl]
      \\
      \\
      & &
      {\color{blue}   50} \ar@<+.7ex>[rr] \ar@<+.7ex>[uur] & &
      {\color{purple} 50} \ar@<+.7ex>[uul] \ar@<+.7ex>[ll]
      \\
      \\
      &
      {\color{blue}   17} \ar@<+.7ex>[dl] \ar@<+.7ex>[rr] & &
      {\color{black}  68} \ar@<+.7ex>[rr] \ar@<+.7ex>[ll] & &
      {\color{purple} 17} \ar@<+.7ex>[ll] \ar@<+.7ex>[dr]
      \\
      {\color{purple} 28} \ar@<+.7ex>[dd] \ar@<+.7ex>[ur] & & & & & &
      {\color{blue}   28} \ar@<+.7ex>[dd] \ar@<+.7ex>[ul]
      \\
      \\
      {\color{black}  67} \ar@<+.7ex>[uu] \ar@<+.7ex>[drr] & & & & & &
      {\color{black}  67} \ar@<+.7ex>[dll] \ar@<+.7ex>[uu]
      \\
      & &
      {\color{blue}   0} \ar@<+.7ex>[rr] \ar@<+.7ex>[ull] & &
      {\color{purple} 0} \ar@<+.7ex>[ll] \ar@<+.7ex>[urr]
    }
  \end{displaymath}
\end{figure}
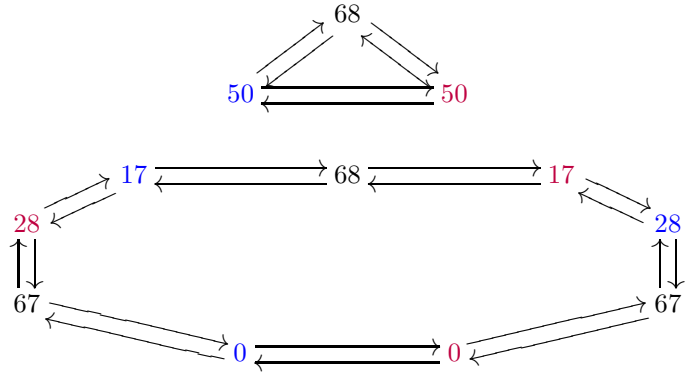

\subsection{An Example for $p\equiv 7 \pmod 8$}
\label{app_7mod8}

Our example here is $p=103$ where we have $h(-p) = 5$ supersingular elliptic curves on the surface and also $h(-4p) = 5$ ones on the floor. In this case we have four nodes in $\F_{p^2}\setminus\F_p$.

\begin{figure}[H]
  \caption{Supersingular Isogeny Graph $X(\bar \F_{103}, 2)$\label{full103-2}}
  \vspace{-2ex}
  \begin{displaymath}
    \xymatrix {
      24 \ar@(r, u) \ar@(l, d) \ar@<.7ex>[rd] & & & &
      \alpha \ar@<.7ex>[ld] \ar@<.7ex>[dd] \ar@<.7ex>[r] &
      \beta \ar@<.7ex>[l] \ar@<.7ex>[dd] \ar@/^1.5pc/[dd]
      \\
      &
      23 \ar@<.7ex>[ld] \ar@<.7ex>[lu] \ar@<.7ex>[r] &
      69 \ar@<.7ex>[l] \ar@<.7ex>[r] \ar@(ld, rd) &
      34 \ar@<.7ex>[l] \ar@<.7ex>[rd] \ar@<.7ex>[ru] & &
      \\
      80 \ar@<.7ex>[ru]|2 \ar@(d, l) & & & &
      \bar\alpha \ar@<.7ex>[lu] \ar@<.7ex>[uu] \ar@<.7ex>[r] &
      \bar\beta \ar@<.7ex>[l] \ar@<.7ex>[uu] \ar@/^1.5pc/[uu]
    }
  \end{displaymath}
\end{figure}
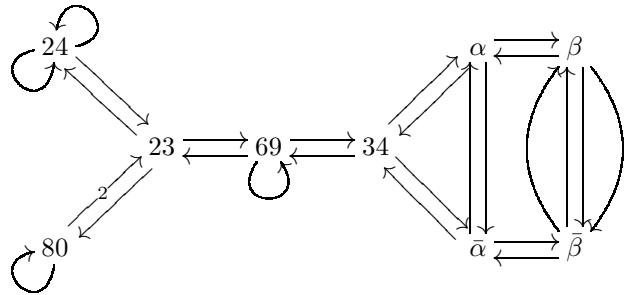

The $2$-isogeny graph over $\F_p$ in this case is already connected. Again a volcano structure can be observed where every supersingular elliptic curve on the floor has exactly one isogeny up starting at it.

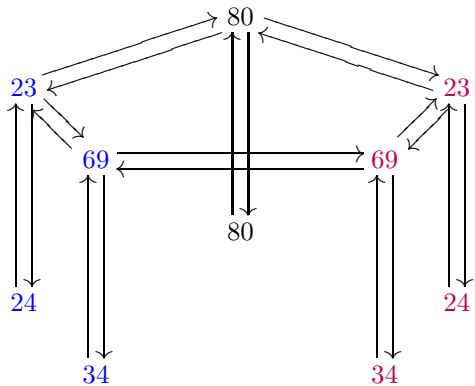
\begin{figure}[H]
  \caption{Rational Supersingular Isogeny Graph $X(\F_{103}, 2)$\label{rational103-2}}
  \vspace{-4ex}
  \begin{displaymath}
    \xymatrix @!=.25pc {
      & & &
      {\color{black}  80} \ar@<.7ex>[ddd] \ar@<.7ex>[drrr] \ar@<.7ex>[dlll]
      \\
      {\color{blue}   23} \ar@<.7ex>[ddd] \ar@<.7ex>[urrr] \ar@<.7ex>[dr]& & & & & &
      {\color{purple} 23} \ar@<.7ex>[ddd] \ar@<.7ex>[ulll] \ar@<.7ex>[dl]
      \\
      &
      {\color{blue}   69} \ar@<.7ex>[ddd] \ar@<.7ex>[rrrr] \ar@<.7ex>[ul]& & & &
      {\color{purple} 69} \ar@<.7ex>[ddd] \ar@<.7ex>[ur] \ar@<.7ex>[llll]
      \\
      & & &
      {\color{black}  80} \ar@<.7ex>[uuu]
      \\
      {\color{blue}   24} \ar@<.7ex>[uuu] & & & & & &
      {\color{purple} 24} \ar@<.7ex>[uuu]
      \\
      &
      {\color{blue}   34} \ar@<.7ex>[uuu] & & & &
      {\color{purple} 34} \ar@<.7ex>[uuu]
    }
  \end{displaymath}
\end{figure}

The smallest prime $\ell>2$ with $\left(\tfrac{-103}{\ell}\right) = 1$ is $\ell=7$. In the full graph every vertex has eight outgoing isogenies so it is not nice to draw. The subgraph of $X(\bar \F_{103},7)$ only consisting of $j$-invariants in $\F_{103}$ is presented in the next figure, so it can be compared to $X(\F_{103},7)$ below.

\begin{figure}[H]
  \caption{Subgraph of Supersingular Isogeny Graph $X(\bar \F_{103}, 7)$\label{full103-7}}
  \vspace{-4ex}
  \begin{displaymath}
    \xymatrix {
      \\
      23 \ar@(ul, dl) \ar@<.7ex>[r] &
      69 \ar@<.7ex>[r] \ar@<.7ex>[l] \ar@<.7ex>@/^1.5pc/[rr] \ar@<.7ex>@/_1.5pc/[rr] &
      80 \ar@<.7ex>[r]|2 \ar@<.7ex>[l]|2 &
      34 \ar@<.7ex>[r] \ar@<.7ex>[l] \ar@(ur, ul) \ar@(dl, dr) \ar@<.7ex>@/^1.5pc/[ll] \ar@<.7ex>@/_1.5pc/[ll] &
      24 \ar@<.7ex>[l] \ar@(dr, ur)
    }
  \end{displaymath}
\end{figure}
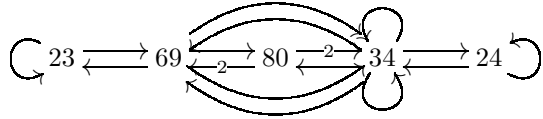

Again we get two isogeny cycles such that floor and surface each are fully connected when we draw the graph $X(\F_{103}, 7)$. This is because the ideal class group is cyclic and generated by a prime ideal of norm $7$.

\begin{figure}[H]
  \caption{Rational Supersingular Isogeny Graph $X(\F_{103}, 7)$\label{rational103-7}}
  \vspace{-4ex}
  \begin{displaymath}
    \xymatrix @!=.25pc {
      & & &
      {\color{black}  80} \ar@<.7ex>[drrr] \ar@<.7ex>[dlll]
      \\
      {\color{blue}   69} \ar@<.7ex>[urrr] \ar@<.7ex>[dr] & & & & & &
      {\color{purple} 69} \ar@<.7ex>[ulll] \ar@<.7ex>[dl]
      \\
      &
      {\color{purple} 23} \ar@<.7ex>[rrrr] \ar@<.7ex>[ul] & & & &
      {\color{blue}   23} \ar@<.7ex>[ur] \ar@<.7ex>[llll]
      \\
      & & &
      {\color{black}  80} \ar@<.7ex>[drrr] \ar@<.7ex>[dlll]
      \\
      {\color{blue}   34} \ar@<.7ex>[urrr] \ar@<.7ex>[dr] & & & & & &
      {\color{purple} 34} \ar@<.7ex>[ulll] \ar@<.7ex>[dl]
      \\
      &
      {\color{purple} 24} \ar@<.7ex>[rrrr] \ar@<.7ex>[ul] & & & &
      {\color{blue}   24} \ar@<.7ex>[ur] \ar@<.7ex>[llll]
    }
  \end{displaymath}
\end{figure}
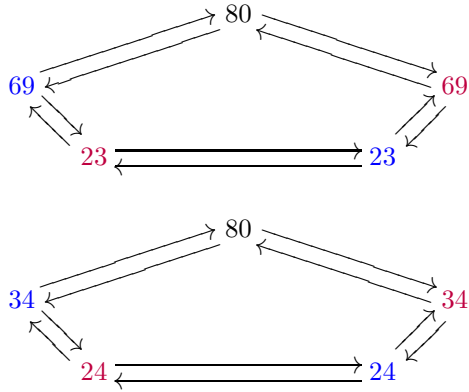

\vfill
\end{document}